\newtheorem{theorem}{Theorem}[section]
\newtheorem{corollary}[theorem]{Corollary}
\theoremstyle{definition}
\newtheorem{definition}[theorem]{Definition}
\newtheorem{example}[theorem]{Example}
\theoremstyle{remark}
\newtheorem{remark}[theorem]{Remark}
\colorlet{texcscolor}{blue!50!black}
\colorlet{texemcolor}{red!70!black}
\colorlet{texpreamble}{red!70!black}
\colorlet{codebackground}{black!25!white!25}
\newcommand\Algphase[1]{%
\vspace*{-.3\baselineskip}\Statex\hspace*{\dimexpr-\algorithmicindent-2pt\relax}
\Statex\hspace*{-\algorithmicindent}\textbf{#1}%
\vspace*{-.7\baselineskip}\Statex\hspace*{\dimexpr-\algorithmicindent-2pt\relax}
}
\newtheorem{prop}{Proposition}
\def\SOP{\mathcal{S}^{\dagger}}
\def\SOPA{\mathcal{S}_\theta}
\def\sopa{s_{\theta}}
\def\EQOP{\mathcal{G}}
\def\SU{\mathcal{U}}
\def\SA{\mathcal{A}}
\def\LOSS{\mathcal{L}}
\def\R{\mathbb{R}}
\def\IND#1{\mathbbm{1}_{{#1}}}
\def\Id{\mathbf{I}}
\numberwithin{equation}{section}
\def\PSU{U}
\def\PSV{V}
\def\BU{\mathbf{u}}
\def\BV{\mathbf{v}}
\def\BA{\mathbf{a}}
\def\SC{\Psi}
\begin{document}

\title[Generative learning for parametric PDEs]{Generative diffusion learning for parametric partial differential equations}


\author{Ting Wang}
\address{Booz Allen Hamilton Inc, McLean, VA, and\newline
Physical Modeling and Simulation Branch, DEVCOM Army Research Laboratory, Aberdeen Proving Ground, MD}
\email{wang\_ting@bah.com}

\author{Petr Plech\'a\v{c}}
\address{Department of Mathematical Sciences, University of Delaware, Newark, DE}
\email{plechac@udel.edu}
\thanks{Research of P.P. was supported by the U.S. Army Research Office Grant W911NF2220234.
Computations in this research were supported in part through the use of DARWIN computing system: A Resource for Computational and Data-intensive Research at the University of Delaware and in the Delaware Region, which is supported by NSF under Grant Number: 1919839.}

\author{Jaroslaw Knap}
\address{Physical Modeling and Simulation Branch, DEVCOM Army Research Laboratory, Aberdeen Proving Ground, MD}
\email{jaroslaw.knap.civ@army.mil}

\subjclass[2020]{Primary 60J60, 65C05, 62M45; secondary 65N75}

\date{\today}


\begin{abstract}
We develop a class of data-driven generative models that approximate the solution operator for parameter-dependent partial differential equations (PDEs). We introduce a probabilistic formulation of the operator learning problem based on denoising diffusion probabilistic models (DDPM), which enables learning the input-output mapping between problem parameters and PDE solutions. We generalize DDPM to a supervised setting, where the solution operator is represented by a family of conditional distributions. This probabilistic formulation, combined with DDPM, naturally provides uncertainty quantification through confidence intervals for the learned solutions. Moreover, the framework is directly applicable to learning solution operators from noisy data sets. We evaluate the computational performance of our method against Fourier Neural Operators (FNO) and show that it achieves comparable accuracy while additionally recovering the noise magnitude in data sets corrupted by additive noise.
\end{abstract}

\maketitle

%
%

\section{Introduction}
In many scientific and engineering applications, it is essential to handle computational models involving random input parameters. Often, this task is computationally challenging as it relies on repeated solves of the underlying mathematical model for distinct values of input parameters. 
It is therefore crucial to develop fast and reliable methods that solve the problems for a range of input parameters. 

\subsection{Parametric partial differential equations}
In the context of parameter-dependent partial differential equations (PDEs) the reliance of a solution on input parameters can be
conveniently captured by the following abstract PDE
\begin{equation}\label{eqn:random-PDE}
\begin{split}
\EQOP(u(x); a(x)) &= 0,\qquad
\text{in~} D \subset \mathbb{R}^d, \\
\end{split}
\end{equation}
defined by the mapping $\EQOP:\SU\times\SA \to \mathbb{R}$.
Here $\SA$ and $\SU$ are suitable spaces of functions over the domain $D$ for which the problem is well-posed.
The input $a = a(\cdot) \in \mathcal{A}$ takes values in the input function space $\SA$
and $u = u(\cdot) \in \mathcal{U}$ is the corresponding solution taking values in the output function space $\SU$. 
A typical example where the proposed approach will be applied is the case when $a$ is generated by a random field distributed according to a probability measure $\nu_a$.
We give specific examples of
PDEs later when discussing various benchmarks in Section~\ref{sec:experiments}. For now, we tacitly assume that the choice of $\EQOP$, $\SA$, and $\SU$ defines a well-posed problem.
In other words, we assume that there exists a unique input-to-output solution operator
\[
\SOP: \SA \to \SU, \quad u = \SOP(a)\,,
\]
such that $\EQOP(u(x); a(x)) = 0$ in $D$.

\subsection{An overview of operator learning}
The overarching goal of the operator learning problem is to find an operator 
$\SOPA : \mathcal{A} \to \mathcal{U}$, parameterized by the parameters $\theta$, and approximating $\SOP$ in a certain sense.
Once $\SOPA$ is learned, solutions to~\eqref{eqn:random-PDE} can be readily evaluated for a range of inputs, thus avoiding repeated solves of the equation for different $a$.

We emphasize that the function spaces $\SA$ and $\SU$ are infinite-dimensional by definition. However, in the numerical context we only have access to the pair of input function $a \in \mathcal{A}$ and output function $u \in \mathcal{U}$ in some finite-dimensional approximation spaces $\mathbb{R}^{d_{\mathcal{A}}}$ and $\mathbb{R}^{d_{\mathcal{U}}}$, respectively. 
To this end, we write $\BA \in \mathbb{R}^{d_{\mathcal{A}}}$ and $\BU \in \mathbb{R}^{d_{\mathcal{U}}}$
for the finite-dimensional vectors obtained by restricting $a$ and $u$ to 
$\mathbb{R}^{d_{\mathcal{A}}}$ and $\mathbb{R}^{d_{\mathcal{U}}}$, respectively.
We assume that there is a unique finite-dimensional mapping such that 
\[
s^{\dagger}: \mathbb{R}^{d_{\mathcal{A}}} \to \mathbb{R}^{d_{\mathcal{U}}}, \quad \BU = s^{\dagger}(\BA)\,.
\]
We denote the interpolation maps 
\[
\mathcal{I}_{\mathcal{A}}: \mathbb{R}^{d_{\mathcal{A}}} \to \mathcal{A}, \quad 
\mathcal{I}_{\mathcal{U}}: \mathbb{R}^{d_{\mathcal{U}}} \to \mathcal{U}
\]
that interpolate from finite-dimensional latent spaces to infinite-dimensional spaces of functions, 
and the restriction maps
\[
\mathcal{R}_{\mathcal{A}}: \mathcal{A} \to \mathbb{R}^{d_{\mathcal{A}}}, \quad 
\mathcal{R}_{\mathcal{U}}: \mathcal{U} \to \mathbb{R}^{d_{\mathcal{U}}}
\]
that project from infinite-dimensional spaces of functions to finite-dimensional latent spaces.
Hence, we have
\begin{eqnarray*}
a = \mathcal{I}_{\mathcal{A}}(\BA)\,, \quad  
u = \mathcal{I}_{\mathcal{U}}(\BU)\,,\quad\quad
\BA = \mathcal{R}_{\mathcal{A}}(a)\,, \quad 
\BU = \mathcal{R}_{\mathcal{U}}(u)\,.
\end{eqnarray*}
The input function $a$ is a realization of a random field assumed to be distributed according to a
known measure $\nu_a$ on the infinite-dimensional space 
$\mathcal{A}$ (see Section~\ref{sec:experiments} for examples of $\nu_a$). 
We denote 
$\nu_{\BA}$ the
pushforward measure $\mathcal{R}^\#_\mathcal{A}\nu_a$ 
of the finite dimensional approximation $\BA = \mathcal{R}_\mathcal{A}(a)$. 
The schematic diagram for operator learning is presented in Figure~\ref{fig:op-diagram}, which suggests two different routes to learn the infinite-dimensional operator: 1) infinite-dimensional approaches that learn directly on functional spaces
\begin{equation}\label{eqn:direct}
\mathcal{S}_{\theta} \approx \mathcal{S}^{\dagger} 
: \mathcal{A} \to \mathcal{U}\,,
\end{equation}
and 2) finite-dimensional approaches that learn the composition 
\begin{equation}\label{eqn:indirect}
\mathcal{I}_{\mathcal{U}} \circ s_{\theta} \circ \mathcal{R}_{\mathcal{A}}
\approx 
\mathcal{S}^{\dagger}: \mathcal{A} \to \mathcal{U}
\,,
\end{equation}
where $s_\theta:\mathbb{R}^{d_{\mathcal{A}}}\to\mathbb{R}^{d_{\mathcal{U}}}$ is the finite-dimensional operator that maps $\BA$ and $\BU$.

\begin{figure}
    \centering
\begin{tikzcd}
a \in \mathcal{A} \arrow[r, "\mathcal{S}_{\theta} \approx \mathcal{S}^{\dagger}"] \arrow[d,  "\mathcal{R}_{\mathcal{A}}"] 
& u \in \mathcal{U} \arrow[d, shift left=2, "\mathcal{R}_{\mathcal{U}}"] \\
\BA \in \mathbb{R}^{d_{\mathcal{A}}} \arrow[r, "s_{\theta} \approx s^{\dagger}"] \arrow[u, shift left=2, "\mathcal{I}_{\mathcal{A}}"]
& \BU \in \mathbb{R}^{d_{\mathcal{U}}} \arrow[u, "\mathcal{I}_{\mathcal{U}}"]
\end{tikzcd}
    \caption{The commutative diagram for operator learning}
    \label{fig:op-diagram}
\end{figure}

The two approaches~\eqref{eqn:direct} and~\eqref{eqn:indirect} provide a natural dichotomy for operator learning methods.
The finite-dimensional approach projects functions into a finite-dimensional representation via discretization or basis expansion and approximates the operator with a mapping between these finite-dimensional spaces. 
Most conventional methods for operator learning, e.g., polynomial chaos~\cite{xiu2002wiener}, stochastic collocation~\cite{babuvska2007stochastic} and reduced basis method~\cite{devore2017theoretical}, all belong to this category. 
More recently, neural network-based approximation~\cite{zhu2018bayesian, khoo2021solving}  and kernel-based regression~\cite{batlle2023kernel} have also been extensively explored for finite-dimensional operator learning.

In contrast, the infinite-dimensional (functional) approach employs specialized neural architectures to directly approximate operators as mappings between function spaces, thereby enabling mesh- and resolution-independent representations. These methods can be broadly classified into two categories: data-driven and physics-informed.
The data-driven approach assumes access to a data set of input-output pairs $\mathcal{D} = \{(\BA^{(n)}, \BU^{(n)})\}_{n=1}^{M}$,
and aims to learn the parametric approximation 
$\SOPA$, such that the empirical data loss
\begin{equation}\label{eqn:C-loss}
\LOSS_{\text{DD}}(\theta) = \frac{1}{{M}}\sum_{n=1}^{M} \|u^{(n)} - \SOPA(a^{(n)})\|^2
\end{equation}
is minimized with respect to parameters $\theta$.
Recently, various deep learning architectures have been designed in order to solve~\eqref{eqn:C-loss}~efficiently. 
State-of-the-art examples include fully convolution networks~\cite{FCN}, PCA networks~\cite{PCANN, hesthaven2018non}, multi-wavelet methods~\cite{gupta2021multiwavelet}, 
turbulent flow nets~\cite{wang2020towards}, 
deep operator nets (DeepONets)~\cite{ONET, POD-ONET}, Fourier neural operators (FNO)~\cite{kovachki2021neural, FNO, GNO}. Among these, DeepONets leverage the universal approximation theorem of operators~\cite{chen1995universal}, while FNOs directly parameterize the integral kernel in the Fourier space~\cite{kovachki2021universal}. Both have received significant attention due to their mesh-independent nature. Despite their success across challenging parametric PDE problems, data-driven methods remain highly data-intensive.
The physics-informed approach relies on the direct knowledge of the underlying PDE to learn $\SOPA$ and utilizes the physics-informed empirical loss in the minimal residual form
\begin{equation}\label{physics-informed-loss}
    \LOSS_{\mathrm{PI}}(\theta)=\frac{1}{{M}}\sum_{n=1}^{M} \|\mathcal{G}(\SOPA(a^{(n)}); a^{(n)})\|^2\,.
\end{equation}
Since the loss only involves the input $a$ and not the solution (output) $u$, 
the physics-informed approach is, in principle, ``data-free'' by design 
~\cite{PINO, patel2021physics, wang2021learning, wang2022stochastic, zhu2019physics}.
Nevertheless, optimization of the physics-informed loss remains challenging and consequently limits its applicability to relatively simple problems~\cite{sun2020surrogate, wang2021understanding,  wang2022}. 

Finally, we emphasize that all of the aforementioned methods learn an operator $\SOPA$ that is inherently deterministic, in the sense that a given input $a$ is always mapped to a unique output $u = \SOPA(a)$. 

\subsection{Denoising diffusion probabilistic model} 
Recently, DDPM has outperformed traditional deep generative models in various machine learning tasks thanks to its ability to generate high quality samples from complex, high dimensional distributions~\cite{dhariwal2021diffusion, ho2020denoising, sohl2015deep, song2019generative}.
Inspired by sampling techniques from non-equilibrium thermodynamics, DDPM utilizes the time reversal property for a class of parameterized Markov chains to learn a sequence of latent distributions that eventually converge to the data distribution~\cite{song2020score, ho2020denoising}. 
Once properly trained, the model sequentially transforms a given Gaussian noise into a sample from the data distribution.
Despite its remarkable performance in a wide range of applications, the standard DDPM trained on a given resolution fails to generalize to different resolutions~\cite{teng2023relay, rombach2022high} and therefore it is not resolution invariant by design.
It is desirable to directly model the distribution of interest in functional spaces and avoid discretization until the last possible moment~\cite{stuart2010inverse}. This viewpoint has motivated numerous recent works on extending DDPM (and more general diffusion-based models) to functional spaces, allowing natural modeling of distribution over functions~\cite{baker2024conditioning, lim2023score, lim2023score_hilbert, pidstrigach2023infinite, kerrigan2022diffusion}.
The functional space DDPM has recently been shown to be effective in various scientific applications, such as Bayesian inverse problem~\cite{baldassari2023conditional, baldassari2024taming}
and stochastic optimal control~\cite{park2024stochastic}.
See~\cite{franzese2025generative} for an excellent overview of infinite-dimensional diffusion models. 

We emphasize that our probabilistic formulation of operator learning can be easily combined with other generative models such as the normalizing flow model~\cite{papamakarios2021normalizing} and the optimal transport generative model.~\cite{albergo2022building}

\subsection{Contributions of our work.}
Building on DDPM, we introduce a probabilistic method, termed the probabilistic diffusion neural operator (PDNO), for solving the operator learning problem in a generative framework. Specifically, we reformulate operator learning as a conditional distribution learning task: given an input $a$, the goal is to learn a generative model for the conditional distribution $u|a$. This formulation enables the natural use of DDPM as the underlying sampler for $u|a$. Since PDNO maximizes the evidence likelihood while learning a family of conditional distributions, it can be directly applied to noisy data sets, where $u|a$ is no longer concentrated on a Dirac measure (i.e., the noise-free case). Furthermore, the learned standard deviation of $u|a$ provides a principled quantification of predictive uncertainty and model confidence.
Finally, based on recent works on the functional space DDPM, we discuss the extension of PDNO to the infinite-dimensional setting.
Compared with FNO and ONet, our method requires fewer regularity assumptions while leveraging the flexibility of DDPM in modeling complex distributions. As a result, PDNO is particularly well-suited for learning highly non-smooth solutions, as demonstrated in the advection PDE example in Section~\ref{sec:experiments}.

In Section~\ref{sec:framework}, we reformulate the operator learning problem as a probabilistic learning task from noisy data. Section~\ref{sec:ddpm} introduces the PDNO solver, where DDPM is generalized to the supervised setting for operator learning. In Section~\ref{sec:ddpm-functional}, we present possible extensions of DDPM to functional spaces. Section~\ref{sec:experiments} presents numerical benchmarks that demonstrate the accuracy of PDNO. For convenience, the notation used throughout the manuscript is summarized in Table~\ref{tab:notations}.

%
%

\section{A probabilistic formulation of operator learning with noisy data}\label{sec:framework}
In this section, we introduce a probabilistic framework for operator learning through maximizing the conditional data likelihood, enabling operator learning with highly noisy dataset. 
We restrict our framework to learning the finite-dimensional mapping $s_{\theta}$, although it naturally extends to the functional setting for the operator $\mathcal{S}_{\theta}$. This is because the generative model employed in the next section corresponds to an indirect approach to operator learning.

\subsection{Operator learning with noisy data}
In practice, the training data for operator learning are often obtained from solving~\eqref{eqn:random-PDE} when $\mathcal{G}$ is known, or from experiments when $\mathcal{G}$ is unknown. 
In either case, the output data are corrupted by a numerical or observational noise $\eta$ and hence it is reasonable to assume that the output is of the form
\begin{equation}\label{eqn:noisy-model}
\BU^{\eta} = s_{\theta}(\BA) + \eta,
\end{equation}
where $\eta$ is a $\mathbb{R}^{d_{\mathcal{U}}}$-valued random noise.
Note that we do not assume any particular distribution on $\eta$ but only require that 
\[
\mathbb{E}[\eta] = 0,  \quad \text{Cov}[\eta] = \lambda \Sigma
\] 
for some unknown constant $\lambda \geq 0$ and positive definite matrix $\Sigma$. 
For simplicity, we assume that the noise is independent of the input $\BA$.
Given a noisy input-output dataset $\mathcal{D}_{\eta} = \{(\BA^{(n)}, \BU^{\eta,(n)})\}_{n=1}^{M}$,
we aim to learn the finite dimensional operator $s_{\theta}: \mathbb{R}^{d_{\mathcal{A}}} \to \mathbb{R}^{d_{\mathcal{U}}}$ and additionally the distribution of the  noise $\eta$. The noisy operator learning setting
leads to a natural probabilistic formulation.
We denote $p(\BA,\BU^{\eta})$ the joint density on the product space $\mathbb{R}^{d_{\mathcal{A}}} \times \mathbb{R}^{d_{\mathcal{U}}}$ that generates $(\BA, \BU^{\eta})$, $p(\BU^{\eta}|\BA)$ is the conditional density, and $p(\BA)$ denotes the marginal.
Similarly, we write $q_{\theta}(\BA,\BU^{\eta})$ for a parametric approximation to $p(\BA, \BU^{\eta})$ and the goal is to learn a probabilistic conditional model 
$q_{\theta}(\BU | \BA)$.
Maximizing the conditional data likelihood is equivalent to maximizing the joint data likelihood
\begin{equation}\label{eqn:likelihood}
    \prod_{n=1}^{M} q_{\theta}(\BU^{\eta,(n)} | \BA^{(n)}) \propto
    q_{\theta}(\mathcal{D}_{\eta}) = \prod_{n=1}^{M} q_{\theta}(\BU^{\eta,(n)} | \BA^{(n)}) p(\BA^{(n)}).
\end{equation}
This induces the following negative log-likelihood (NLL) loss function,
\begin{equation}\label{eqn:NLL-loss}
\mathcal{L}_{\text{NLL}}(\theta) = -\sum_{n=1}^{M}\log q_{\theta}(\BU^{\eta,(n)} | \BA^{(n)})
\end{equation}
The mean of the conditional model $q_{\theta}$ provides an approximation to the finite-dimensional operator $s_{\theta}$, i.e., 
\[
s_{\theta}(\BA)=\int_{\R^{d_\mathcal{U}}} \BU^{\eta} q_{\theta}(\BU^{\eta}|\BA)\, d\BU^\eta \,.
\]
Additionally, the uncertainty of the approximation can be quantified by estimating the variance of $q_{\theta}$, i.e.,
\[
\int_{\R^{d_\mathcal{U}}} \|\BU^{\eta}-s_\theta(\BA)\|^2 q_{\theta}(\BU^{\eta}|\BA)\, d\BU^\eta 
\]
Two special cases of our probabilistic formulation are of particular interest.
\begin{itemize}
\item {\it Gaussian noise setting.} When $\lambda > 0$, $\eta$ follows a Gaussian
distribution $\mathcal{N}(0, \lambda\Sigma)$, the data set may contain repeated realizations of $\BA$ whose corresponding outputs $\BU^{\eta}$ are different due to noise. 
In this case, our probabilistic model $q_{\theta}(\BU^{\eta} | \BA)$ 
learns a class of Gaussian distributions $\mathcal{N}(s_{\theta}(\BA), \lambda\Sigma)$. 
It is important to note that the NLL loss~\eqref{eqn:NLL-loss} reduces to the data-driven loss~\eqref{eqn:C-loss} when the covariance matrix $\lambda\Sigma = \lambda \Id$ with $\lambda$ being a known constant. 
Therefore, the NLL loss function~\eqref{eqn:NLL-loss} generalizes the $L_2$ loss functions commonly used in the operator learning literature. 
From a Gaussian regression perspective, if we place a Gaussian random field prior on $s_{\theta}$, i.e., $s_{\theta} \sim \mathcal{GP}(0, K)$ with some matrix-valued kernel $K$, our probabilistic formulation reduces to the vector-valued Gaussian processes regression of~\cite{batlle2023kernel}.

\item \indent {\it Noise-free setting.} When $\lambda = 0$, for each input $\BA$, there is a unique output $\BU$ in the noise-free data set $\mathcal{D}$ that solves~\eqref{eqn:random-PDE}.
In this case, our probabilistic model $q_{\theta}(\cdot | \BA)$ aims at learning 
a class of Dirac measures $\delta_{\BU}(\cdot|\BA)$. 
However, from a theoretical perspective, the density $p(\cdot|\BA)$ of $\delta_{\BU}(\cdot|\BA)$ does not exist. Instead,
the learning task should be reformulated in terms of distributions/measures, and the goal is to approximate the Dirac measure $\delta_{\BU}(\cdot|\BA)$. 
We postpone the precise meaning of operator learning from noise-free data to Section~\ref{sec:ddpm-functional} (see Remark~\ref{remark:noise-free-OL}).
For ease of presentation, we still write symbolically $q_{\theta}(\cdot| \BA)$ for the density without reformulating our probabilistic framework in terms of measures. 
\end{itemize}

The advantage of the probabilistic approach to operator learning is two-fold:
{\it (i) Flexibility:} the probabilistic model $q_{\theta}(\cdot|\BA)$ is more flexible than deterministic models since it does not depend 
on the specific choice of the cost functional 
$\LOSS$. 
As we have discussed, the framework includes the $L_2$ loss as a special case when $\eta$ is Gaussian.
We do not assume any distribution for the conditional $q_{\theta}(\cdot|\BA)$ but rather learn it completely from the data in a generative manner. 
{\it (ii) Uncertainty evaluation:}  
In practice, the data can contain noise due to numerical/measurement errors, and similarly, the trained model can demonstrate uncertainty 
due to over-fitting or under-fitting. 
Aside from the prediction of the output, 
we are often interested in the quality of the prediction as well. 
The probabilistic model $q_{\theta}(\cdot| \BA)$ allows us to quantify the data and model uncertainty. 

%
%

\section{Conditional DDPM for operator learning}\label{sec:ddpm}
Under the proposed probabilistic framework, the specific model that we employ for learning the distribution $q_{\theta}(\cdot | \BA)$ is the DDPM, which provides a generative model for sampling from the desired distribution.
In this section, we first generalize the DDPM for operator learning in finite-dimensional space and then discuss its extensions to the functional space setting. 

\subsection{Conditional DDPM in finite dimensional spaces}
Given a set of unlabeled observations, the DDPM learns a generative model by constructing a time reversible Markov chain whose initial distribution coincides with the distribution of interest~\cite{song2020score, ho2020denoising}. 
To apply the DDPM for the conditional distribution $q_{\theta}(\cdot |\BA)$, we generalize the denoising framework to the conditional setting. 
To this end, for each input realization $\BA$, we define a complete probability space $(\Omega, \mathcal{F}, \mathbb{P}_{\BA})$\footnote{For the sake of readability, we avoid the abstract measure-theoretic treatment involving conditional expectations.}
and consider a stochastic process $\{\PSU(t; \BA)\}_{t\in [0, T]}$ satisfying the stochastic differential equation (SDE)
\begin{equation}\label{eqn:forward-sde-finite}
\begin{split}
&d\PSU(t; \BA) = -\frac{1}{2} \alpha(t) \PSU(t; \BA) \, dt + \sqrt{\alpha(t)} dW(t)\,, \\
&\PSU(0; \BA) \sim p(\cdot| \BA)\,,
\end{split}
\end{equation}
for $t \in [0, T]$, 
where $\BA \in \mathbb{R}^{d_{\mathcal{A}}}$ is an input sample in the operator learning task, $W(t)$ is a $\mathbb{R}^{d_{\mathcal{U}}}$-valued standard Brownian motion adapted to the filtration $\{\mathcal{F}_t\}$ and the  noise scheduling function $\alpha(t) \in \mathbb{R}$ is chosen such that 
$\PSU(T; \BA)$ converges to the standard normal 
$\mathcal{N}(0, \mathbf{I})$ as $T \to \infty$\footnote{Our framework allows for a generalization of $\alpha(t)$ to $\alpha(t; \BA)$, allowing the noise schedule to depend on the input parameter 
$\BA$. However, as we demonstrate in the numerical section, an input-independent noise schedule already yields satisfactory performance for the DDPM.}.
Note that we require that the distribution of the initial state $\PSU(0; \BA)$ coincides with the true conditional distribution $p(\cdot| \BA)$ from the operator learning task.
Denote $\mu_t(\cdot|\BA)$ the marginal distribution of $\PSU(t; \BA)$ and $\rho_t(\cdot | \BA)$ the marginal density of $\PSU(t; \BA)$ with respect to the Lebesgue measure $d\BU$, i.e., 
\[
\mu_t(B | \BA) = \mathbb{P}_{\BA}(\PSU(t; \BA) \in B) \,, \qquad  B \text{~is a Borel set in}~\mathbb{R}^{d_{\mathcal{U}}}\,, 
\]
and 
\[
{\mu_t(d\BU | \BA)} = \rho_t(\BU | \BA) d\BU\,.
\] 
We define the reverse process 
$\{\PSV(t;\BA)\}_{t \in [0, T]}$ by $\PSV(t;\BA) := \PSU(T-t;\BA)$. Thanks to the martingale representation theorem~\cite{ethier2009markov}, under certain conditions, the reverse process can be written as the solution to the following reverse SDE
\begin{equation}\label{eqn:reverse-SDE}
\begin{split}
&d\PSV(t; \BA) = \left(\frac{1}{2} \alpha(T-t) \PSV(t; \BA) + \alpha(T-t) \SC(T-t, \PSV(t; \BA),  \BA)\right)\, dt\\
    &\qquad \qquad\quad + \sqrt{\alpha(T-t)} d \widetilde{W}(t)\,,\\
        &\PSV(0;\BA) \sim \rho_T(\cdot | \BA)
\end{split}
\end{equation}
for $t \in [0, T]$, 
where $\Psi: [0, T] \times \mathbb{R}^{d_{\mathcal{U}}} \times \mathbb{R}^{d_{\mathcal{A}}} \to \mathbb{R}^{d_{\mathcal{U}}}$
is the score function of the marginal density, i.e.,
\begin{equation}\label{eqn:score-function}
\SC(t, \BU, \BA) := \nabla_{\BU}  \log \rho_{t} (\BU | \BA)
\end{equation}
 and $\widetilde{W}(t)$ is a $\mathbb{R}^{d_{\mathcal{U}}}$-valued standard Brownian motion adapted to a reversed filtration $\{\widetilde{\mathcal{F}}_t\}$. 
By definition, for a given input parameter $\BA$, the reverse SDE~\eqref{eqn:reverse-SDE} transports $\rho_T(\cdot | \BA) \approx \mathcal{N}(0, \mathbf{I})$ to the target distribution $\rho_0(\cdot| \BA) \approx p(\cdot | \BA)$, thereby serving as a generative model for sampling from it. 

\begin{example}
As an example, we consider the following time-homogeneous Ornstein-Uhlenbeck (OU) process
    \begin{equation}\label{eqn:OU-process}
    d\PSU(t; \BA) = -\frac{1}{2}\PSU(t; \BA) dt + \sqrt{C} dW(t)\,,
    \end{equation}
where $C \in \mathbb{R}^{d_{\mathcal{U}} \times d_{\mathcal{U}}}$ is positive definite. 
Note that, when $C$ is identity, a random-time-change $t \to \int_0^t \alpha(s) \, ds$ 
of the above SDE leads to the forward SDE~\eqref{eqn:forward-sde-finite}, \cite{oksendal2013stochastic}.
The score function associated to the OU process admits an analytical form
    \begin{equation}\label{eqn:OU-score-function}
    \begin{split}
     C\nabla_{\BU}  \log \rho_{t} (\BU | \BA) = -\frac{1}{1 - \text{e}^{-t}} \mathbb{E}_{\BA}\left[ U(t; \BA) -\text{e}^{-\frac{t}{2}} U(0; \BA)  ~|~ U(t; \BA) = \BU\right]\,, \quad t \in (0, T],
    \end{split}
    \end{equation}
where $\mathbb{E}_{\BA}$ is the expectation taken with respect to all realizations of the OU process $\{U(t; \BA)\}_{t \in [0, T]}$ conditioned on the input $\BA \in \mathbb{R}^{d_{\mathcal{A}}}$. 
The proof is presented in Appendix~\ref{app:OU-score-function} for completeness.
\end{example}

\begin{remark}
    The conditional expectation on the right-hand side of~\eqref{eqn:OU-score-function} can alternatively be used as the definition of the score function, which is more convenient to deal with the case when the density (with respect to the Lebesgue measure) does not exist. For example, when the initial distribution is Dirac, i.e., $U(0;\BA) \sim \delta_{\BU_0}(\cdot)$, the left-hand side of~\eqref{eqn:OU-score-function} is not well-defined whereas the $t \to 0^+$ limit of the right-hand side still exists. 
\end{remark}

In general, the score function is often unknown and therefore must be approximated through parameterization $\SC_{\theta}(t, \BU, \BA)$. 
After taking expectations with respect $\nu_{\BA}$ and averaging over time, a natural choice of the loss function is the score matching (SM) objective
\begin{equation}\label{eqn:SM-objective}
\mathcal{L}_{\text{SM}}(\theta) := \frac{1}{T}\int_0^T \ell_{\mathrm{SM}}(t;\theta)\, dt\,,
\end{equation}
where
\[
\ell_{\mathrm{SM}}(t;\theta):= \mathbb{E}_{\nu_{\BA}} \mathbb{E}_{\rho_t(\BU | \BA)} \left[\|\SC_{\theta}(t, \PSU(t; \BA), \BA) - \SC(t, \PSU(t; \BA), \BA)\|^2\right]\,,
\]
and $\mathbb{E}_{\rho_t(\BU | \BA)}$ is the expectation taken with respect to the density $\rho_t(\BU | \BA)$ and $\mathbb{E}_{\nu_{\BA}}$ is the expectation taken with respect to the measure $\nu_{\BA}$.
Unfortunately, the SM objective is intractable since the initial distribution is unknown. 
However, the following result provides a tractable alternative to the SM objective, 
\cite{vincent2011connection}; see Appendix~\ref{app:remainder-R} for the derivation.
\begin{theorem}\label{thm:DSM-objective-continuous}
Suppose that the true score function satisfies that
\[
\sup_{t \in (0, T]}\mathbb{E}_{\nu_{\BA}}\mathbb{E}_{\rho_{t}(\BU|\BA)} \left[\|\SC(t, \PSU(t; \BA), \BA)\|^2\right] < \infty\,.
\]
Then minimizing the time-dependent SM objective~\eqref{eqn:SM-objective} is equivalent to minimizing the following denoising score matching (DSM) objective 
\begin{equation}\label{eqn:DSM-continuous}
\mathcal{L}_{\text{DSM}}(\theta) := \frac{1}{T}\int_0^T \ell_{\text{DSM}}(t; \theta)  \, dt\,
\end{equation}
with
\[
\ell_{\text{DSM}}(t; \theta) 
:=
\mathbb{E}_{\nu_{\BA}}
\mathbb{E}_{\rho_0(\BU_0 | \BA)}
\mathbb{E}_{\rho_{t|0}(\BU | \BU_0)} \left[\|\SC_{\theta}(t, \PSU(t; \BA), \BA) - \nabla_{\BU} \log \rho_{t|0} (\PSU(t; \BA) | \PSU(0;\BA)) \|^2\right]\,,
\]
where 
$\rho_{t|0} (\BU | \BU_0)$\footnote{Note that when conditioned on $\PSU_0 = \BU_0$, $\rho_{t|0} (\BU | \BU_0) = \rho_{t|0} (\BU | \BU_0, \BA)$ and hence the omission of $\BA$ in the notation.} is the density of $\PSU(t; \BA)$ conditioned on the initial state $\PSU(0; \BA) = \BU_0$
and 
the expectation $\mathbb{E}_{\rho_0(\BU_0 | \BA)}$ is taken with respect to the initial density $\rho_0(\BU_0 | \BA)$.
\end{theorem}

For numerical implementation, an Euler-Maruyama discretization of the forward SDE with step size $\Delta t \ll 1$ leads to
a class of discrete time Markov chains $\PSU_{0:N}(\BA)$ satisfying the discrete dynamics\footnote{Here we used the fact that $1 - \frac{1}{2} \beta_n \approx \sqrt{1 - \beta_n}$ when $\Delta t \ll 1$.},
\[
\PSU_n(\BA) = \sqrt{1 - \beta_{n}}\PSU_{n-1}(\BA) 
+ \sqrt{\beta_{n}} \epsilon_n, \qquad \PSU_0(\BA) \sim p(\cdot | \BA) 
\]
for $n = 1, \ldots, N = \lfloor T/\Delta t \rfloor$, where $\epsilon_n$ are $\mathbb{R}^{d_{\mathcal{U}}}$-valued iid standard Gaussians and 
\[\beta_n := \alpha((n-1) \Delta t) \Delta t.\]
Throughout the paper we consider the path measure of $\PSU_{0:N}(\BA)$ with the
joint density $\bar{\rho}(\BU_0,\BU_1,\dots,\BU_N | \BA)$\footnote{For simplicity of notation, we do not use the notation~$\bar{\rho}_{[0:N]}(\BU_0,\BU_1,\dots,\BU_N | \BA)$ to indicate the fact that it is the path density from $0$ to $N$. The meaning of the density should be clear from the variables $\BU_0, \ldots, \BU_N$ in the density. Unless otherwise specified, the same type of notation applies to other density functions, e.g., $\bar{\rho}(\BU_n | \BU_{n-1} )$ stands for $\bar{\rho}_{n|n-1}(\BU_n | \BU_{n-1} )$}, that is,
\[
\bar{\rho}(\BU_0,\BU_1,\dots,\BU_N | \BA)\, d\BU_0 \dots d\BU_N = \mathbb{P}_{\BA}(\PSU_0(\BA) \in d\BU_0, \ldots, \PSU_N(\BA) \in d\BU_N).
\]
Consequently, for $\Delta t$ small, the above Markov chain induces the forward transition density for $\PSU_{n-1}(\BA)$ to $\PSU_n(\BA)$,
\begin{equation}\label{eqn:forward-transition}
    \bar{\rho}(\BU_n | \BU_{n-1} ) = \mathcal{N}(\sqrt{1 - \beta_{n}} \BU_{n-1}, \beta_{n} \mathbf{I}) 
\end{equation}
for $n = 1, \ldots, N$.
We emphasize that $\PSU_n(\BA)$ depends on $\BA$ only through $\PSU_{n-1}(\BA)$.
Recall that by definition, the initial density coincides with the true conditional density in the operator learning task, i.e., 
\[
\bar{\rho}(\BU_0 | \BA) = p(\BU_0| \BA)\,.
\] 
By Markov property, one can readily derive the $n$-step forward transition density (conditioned on $\BU_{0}$),
\begin{equation}\label{eqn:un_given_u0}
\bar{\rho}(\BU_n | \BU_{0}) := \mathcal{N}(\sqrt{\gamma_n}\BU_0, (1 - \gamma_n)\mathbf{I}),
\end{equation}
and hence
\[
\nabla_{\BU_n}\log \bar{\rho}(\BU_n | \BU_{0}) = -\frac{\BU_n - \sqrt{\gamma_n}\BU_0}{1 - \gamma_n}\,,
\]
where $\gamma_n = \prod_{i=1}^n (1 - \beta_n)$. 
Since $\PSU_n(\BA) - \sqrt{\gamma_n}\PSU_0(\BA) \overset{\text{d}}{=} \sqrt{1 - \gamma_n}\epsilon_n$, we have
\[
\nabla_{\BU}\log \bar{\rho}(\PSU_n(\BA) | \PSU_{0}(\BA)) \overset{\text{d}}{=} -\frac{\epsilon_n}{\sqrt{1-\gamma_n}}\,,
\]
which motivates, in order to match the noise $\epsilon_n$, the definition
\[
\epsilon_{\theta}(n, \BU, \BA) := -\sqrt{1 - \gamma_n} \SC_{\theta}(n \Delta t, \BU, \BA)
\]
for $n=1, \ldots, N$, $\BU \in \mathbb{R}^{d_{\mathcal{U}}}$ and $\BA \in \mathbb{R}^{d_{\mathcal{A}}}$.
Finally, applying the reparameterization trick and ignoring the prefactor $1/(1 - \gamma_n)$ leads to the following objective function.
\begin{definition}
The discrete denoising score matching (DDSM) objective is defined as
\begin{equation}\label{eqn:DSM}
\mathcal{L}_{\mathrm{DDSM}}(\theta) := \frac{1}{N}\sum_{n=1}^N\mathbb{E}_{\nu_{\BA}} \mathbb{E}_{\bar{\rho}(\BU_0|\BA)} \mathbb{E}_{\epsilon_n} \left[\| \epsilon_{\theta}(n, \sqrt{\gamma_n}\PSU_0(\BA) + \sqrt{1-\gamma_n}\epsilon_n, \BA) - \epsilon_n \|^2\right],
\end{equation}
where 
  $\mathbb{E}_{\epsilon_n}$ is the expectation taken with respect to the standard normal distribution and $\mathbb{E}_{\bar{\rho}(\BU_0|\BA)}$ is with respect to the data distribution $p(\BU_0|\BA)$ from the operator learning task.    
\end{definition}
\begin{remark}
    It should be emphasized that the index for $\epsilon_{\theta}$ starts at $n = 1$, which corresponds to learning $\Psi(\Delta t, \BU, \BA)$. 
    In other words, the DDPM model is not required to learn the score function at $t=0$, i.e., $\Psi(0, \BU, \BA)$, which is not be well-defined when $U(0; \BA)$ is Dirac.
\end{remark}

The training stage of DDPM involves minimizing the above objective, learning a noise predictor $\epsilon_{\theta}$ that approximates Gaussian noise $\epsilon_n$ with neural networks at each time step $n$.
In fact, it can be shown that minimizing the DDSM objective~\eqref{eqn:DSM} is equivalent to minimizing the negative variational lower bound (NVLB) of the NLL loss~\eqref{eqn:NLL-loss} (see Appendix~\ref{app:NVLB} for the derivation), which justifies the application of DDPM under the proposed probabilistic operator learning framework.     

Once the score function is learned, the inference stage of DDPM involves discretizing 
the reverse SDE~\eqref{eqn:reverse-SDE}, thereby
providing a generative model to transport the standard Gaussian $\mathcal{N}(0, \mathbf{I})$ to the desired conditional distribution $\bar{\rho}_0(\cdot | \BA)$.
To this end, we consider the following weak first-order discretization to $\PSV((N-n+1)\Delta t; \BA))$,
\[
\PSV_{n-1}(\BA) = \frac{1}{\sqrt{1-\beta_n}} \left(\PSV_{n}(\BA) + \beta_n \SC_{\theta}(n \Delta t, \PSV_{n}(\BA), \BA)\right) + \sqrt{\sigma_n} \epsilon_n, \qquad \PSV_{N}(\BA) \sim \rho_{N\Delta t}(\cdot | \BA)
\]
for $n = N, N-1, \ldots, 1$, where $\sigma_n$ is a sequence that satisfies $\beta_n^{-1}\sigma_n \to 1$ as $n \to \infty$. 
In practice, $\PSV_{N}(\BA)$ is sampled from $\mathcal{N}(0, \mathbf{I})$ for $N$ sufficiently large.
Note that $\PSV_{n}(\BA)$ is a first-order approximation to the continuous time reverse process $\PSV((N-n)\Delta t; \BA))$ in the weak sense and hence it approximates the forward process $\PSU_n(\BA)$ weakly. Moreover, the Markov chain $V_{0:N}(\BA)$ depends on $\theta$ since the drift term depends on $\Psi_{\theta}$.

We denote $\bar{\rho}_{\theta}(\BV_1, \ldots, \BV_N | \BA)$ the conditional path density of $\PSV_{0:N}(\BA)$, i.e., 
\[
\bar{\rho}_{\theta}(\BV_0,\BV_1,\dots,\BV_N | \BA)d\BV_0 \dots d\BV_N = \mathbb{P}_{\BA}(\PSV_0(\BA) \in d\BV_0, \ldots, \PSV_N(\BA) \in d\BV_N).
\]
The corresponding reverse transition density for $\PSV_{n}(\BA)$ to $\PSV_{n-1}(\BA)$ is given by
\begin{equation}\label{eqn:reverse-transition}
    \bar{\rho}_{\theta}(\BV_{n-1} | \BV_n, \BA) := \mathcal{N}( m_{\theta}(n, \BV_n, \BA), \Sigma_{\theta}(n, \BV_n, \BA)), 
\end{equation}
where  
\[
m_{\theta}(n, \BV_n, \BA)
=
\frac{1}{\sqrt{1-\beta_n}} \left(\BV_{n} - \frac{\beta_n}{\sqrt{1 - \gamma_n}} \epsilon_{\theta}(n, \BV_{n}, \BA) \right)
\]
and 
\[
\Sigma_{\theta}(n, \BV_n, \BA) = \sigma_n \mathbf{I}.
\]
Since $\PSV_n(\BA)$ is only required to be a weak first-order approximation (in $\Delta t$) to $\PSV(n\Delta t; \BA)$,
we have some flexibility to choose the variance $\sigma_n$.
For the two particular cases in the operator learning task, that is, the Gaussian noise setting and the noise-free setting, 
the following theorem provides a guide for the choice of $\sigma_n$, \cite{sohl2015deep}.
\begin{theorem}\label{thm:optimal-sigma}
Let $\theta^*$ be the solution to the DDSM objective~\eqref{eqn:DSM}.
Choose
\begin{equation}\label{eqn:cov}
\sigma_n
= \begin{cases}
\dfrac{1 - \gamma_{n-1}}{1 - \gamma_n}\beta_n & 
       \mbox{noise-free,}\\[8pt]
\beta_n  & \mbox{standard Gaussian}
         \end{cases}
\end{equation}
for the variance of the reverse Markov chain $\PSV_{0:N}(\BA)$.
Then the full-path likelihood of the Markov chain $\PSV_{0:N}(\BA)$ coincides with that of the Markov chain $\PSU_{0:N}(\BA)$, i.e., 
\[
\mathbb{E}_{\nu_{\BA}}\mathbb{E}_{\bar{\rho}_{\theta^*}(\cdot | \BA)}\left[-\log {\bar{\rho}}_{\theta^*}(\PSV_{0:N}(\BA) | \BA) \right]
=
\mathbb{E}_{\nu_{\BA}}\mathbb{E}_{\bar{\rho}(\cdot | \BA)}\left[-\log \bar{\rho}(\PSU_{0:N}(\BA) | \BA) \right]\,,
\]
where $\mathbb{E}_{\bar{\rho}(\cdot | \BA)}$ is the expectation taken with respect to all possible realizations of the path $\PSU_{0:N}(\BA)$.
\end{theorem}
The result essentially states that the parameterization~\eqref{eqn:cov} is optimal in the sense that the full-path likelihood under 
the approximated density $\bar{\rho}_{\theta^*}$ recovers the true full-path likelihood.
In Appendix~\ref{app:cov-choice}, we prove a stronger version of the above result. As we shall see in Section~\ref{sec:experiments}, numerical experiments justify that the choice of $\sigma_n$ has an important impact on the inference stage. 
In the case of a data set with non-Gaussian noise, 
a linear combination of the above two covariances can be used by
introducing an additional hyperparameter. 
Note that the covariance $\Sigma_{\theta}(n, \PSV_n(\BA), \BA)$ in~\eqref{eqn:cov} is not parameterized by $\theta$ and is independent of $\PSV_n(\BA)$ and $\BA$.

The resulting PDNO algorithm consists of two stages: 
the training stage that minimizes $\mathcal{L}_{\mathrm{DDSM}}(\theta)$ to learn the reversed conditional Markov model $\bar{\rho}_{\theta}(\BV_{n-1}| \BV_n,\BA)$ and the inference stage that samples sequentially from $\bar{\rho}_{\theta}(\BV_{n-1}| \BV_n,\BA)$, as depicted in the schematic diagram~\ref{fig:schemeDDMP}.
Algorithm~\ref{alg:PDNO} summarizes the procedure for training and sampling of PDNO. 
As indicated in~\eqref{eqn:reverse-transition}, we parameterize the noise predictor $\epsilon_{\theta}$ for $\bar{\rho}_{\theta}(\BV_{n-1}| \BV_n,\BA)$, which has been empirically observed to produce higher precision than parameterizing the mean $m_{\theta}$ directly.
For implementation, we follow the architecture presented in~\cite{ho2020denoising} to use a UNet backbone to represent the noise predictor $\epsilon_{\theta}(n, \BV_n, \BA)$ with the sinusoidal position embedding for $n$, \cite{UNET}. 
In comparison with the unconditional DDPM, the conditional DDPM utilized by PDNO takes an additional argument $\BA$ as the input to the UNet. In our implementation, $\BA$ and $\BV_n$ are channel-wise concatenated before being fed into the UNet.

\begin{algorithm}
\caption{Probabilistic diffusion neural operator (PDNO)}\label{alg:PDNO}
\begin{algorithmic}[1]
\Algphase{Training Stage:} 
\State given the training data set $\mathcal{D}_{\eta}$ of size ${M_{\text{train}}}$
\State \textbf{repeat}
\State sample $(\BA, \BU^{\eta})$ from $\mathcal{D}_{\eta}$
\State sample $n \sim \text{Unif}({1:N})$
\State sample $\epsilon \sim \mathcal{N}(0, \mathbf{I})$
\State stochastic gradient descent on $\mathcal{L}_{\text{DDSM}}(\theta)$ in~\eqref{eqn:DSM} 
\State \textbf{until} converged to obtain $\epsilon_{\theta}$ 
\Algphase{Inference Stage:}
\State given the test data set $\widetilde{\mathcal{D}}$ of size ${M_{\text{test}}}$
\For{test input $\BA$ in $\widetilde{\mathcal{D}}$}
\For{$m = 1, \cdots, M_{\text{infer}}$}
\State $\BV_N^{(m)}(\BA) \sim \mathcal{N}(0, \mathbf{I})$
\For{$n = N, \cdots, 1$}
\State $\BV_{n-1}^{(m)}(\BA) \sim \rho_{\theta}(\BV_{n-1}|\BV_n^{(m)}, \BA)$ in~\eqref{eqn:reverse-transition}
\EndFor
\EndFor
\State \textbf{return} statistics of $\{\BV_0^{(m)}(\BA)\}_{m=1}^{M_{\text{infer}}}$
\EndFor
\end{algorithmic}
\end{algorithm}

As a probabilistic model, PDNO learns a class of conditional distributions $q_{\theta}(\cdot|\BA)$ rather than the deterministic solution operator $s_{\theta}$. 
For a given test input $\BA$, $M_{\text{infer}}$ samples generated from $q_{\theta}(\cdot|\BA)$ are required to obtain various statistics of the predicted output.
However, it should be emphasized that, in the case of noise-free data where we learn a class of point delta distribution $\delta_{\BU}(\cdot|\BA)$ concentrated at $\BU$, only a single sample is required to estimate the corresponding solution provided that the model is well trained.
In practice, we validate the learned model $\delta_{\BU}(\cdot|\BA)$ with a diagnostic run on the test data set to check whether the estimated standard deviation is approximately zero.

\begin{figure}[ht]
    \centering
    \includegraphics[width=0.9\textwidth]{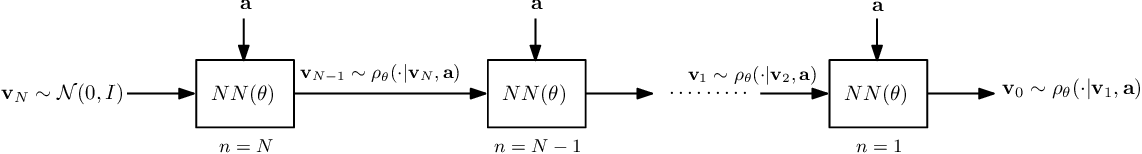}
    \caption{A schematic diagram of the inference stage of PDNO} 
    \label{fig:schemeDDMP}
\end{figure}

\subsection{Extension to functional spaces}\label{sec:ddpm-functional}
As we shall demonstrate in Section~\ref{sec:experiments}, the proposed PDNO method, built on a finite-dimensional DDPM, achieves remarkable performance across a variety of numerical benchmarks. Nevertheless, the method fails to be discretization invariant since the underlying DDPM sampler is intrinsically modeled by a finite-dimensional SDE. Recent studies on DDPM in functional spaces have opened the possibility of extending our PDNO method to varying grid resolutions~\cite{pidstrigach2023infinite, baldassari2023conditional, lim2023score, lim2023score_hilbert, franzese2025generative}. Although this work primarily focuses on learning the finite-dimensional operator $s_{\theta}$ via DDPM in finite-dimensional spaces, we briefly discuss extensions to functional spaces for two reasons: (i) the Hilbert space formulation of DDPM subsumes the finite-dimensional DDPM as a special instance, and (ii) the nonexistence of the density in the noise-free setting can be addressed through a measure-theoretic treatment in infinite dimensions, thereby providing a rigorous interpretation of noise-free operator learning (see Remark~\ref{remark:noise-free-OL}).

To extend our framework to functional spaces, we assume the perturbed model
\[u^{\eta} = \mathcal{S}_{\theta}(a) + \eta,\]
where $\eta \sim \mathcal{N}(0, \lambda \mathcal{C})$ is a $\mathcal{U}$-valued Gaussian random field with a scaled covariance operator $\lambda\mathcal{C}$ for a constant $\lambda \geq 0$.
In other words, for each input function $a \in \mathcal{A}$, we postulate that the output $u \in \mathcal{U}$ is perturbed by a Gaussian random field noise $\eta$ and hence $u^{\eta}$ is a Gaussian random field $\mathcal{N}(\mathcal{S}^\dagger(a), \lambda \mathcal{C})$.
The objective is to learn a generative model that generates samples from $\mathcal{N}(\mathcal{S}^\dagger(a), \lambda \mathcal{C})$ for each input $a$. As in the finite-dimensional setting, we consider both the noisy case $\lambda > 0$ and the noise-free case $\lambda \to 0$. 

We assume that both the input and output spaces are separable Hilbert spaces, denoted by $(\mathcal{A}, \langle \cdot, \cdot \rangle_{\mathcal{A}})$ and $(\mathcal{U}, \langle \cdot, \cdot \rangle_{\mathcal{U}})$, respectively.
For each random field input realization $a \in \mathcal{A}$, we denote $(\Omega, \mathcal{F}, \mathbb{P}_a)$ the complete conditional probability space associated with $a$.
For the sake of clarity and simplicity, we consider a class of $\mathcal{U}$-valued time-homogeneous Ornstein-Uhlenbeck process $\{U(t; a)\}_{t \in [0, T]}$ parameterized by $a$,  
\begin{equation}\label{eqn:forward-sde-infinite}
\begin{split}
&d U(t; a) = -\frac{1}{2}\PSU(t; a) \, dt + dW^{\mathcal{U}}(t)\,,\\
&U(0; a) \sim \mathcal{N}(\mathcal{S}^{\dagger}(a), \lambda \mathcal{C})\,
\end{split}
\end{equation}
for $t \in [0, T]$, 
where $W^{\mathcal{U}}(t)$ is a $\mathcal{U}$-valued $\mathcal{C}$-Wiener process with the covariance operator $\mathcal{C}$ (see Appendix~\ref{app:Wiener-process}).
We use the notation $\mu_t(\cdot|a)$ for the induced measure of $U(t; a)$ in the infinite-dimensional setting, i.e., 
\[\mu_t(B | a) = \mathbb{P}_a(U(t; a) \in B), \qquad B \in \sigma(\mathcal{U}).\]
Note that by definition $\mu_0(\cdot|a) = \mathcal{N}(\mathcal{S}^{\dagger}(a), \lambda \mathcal{C})$.
Furthermore, for each $t$, we denote by $\mu_{[0, t]}(\cdot|a)$ the path-space measure induced by the process $\{U(s; a)\}_{s \in [0, t]}$.

From a theoretical perspective, the score function~\eqref{eqn:score-function} is no longer well-defined since there is no Lebesgue measure in infinite-dimensional spaces.
To avoid dealing with the nonexistence of the score function in an infinite-dimensional setting, we follow the approach developed in~\cite{pidstrigach2023infinite}. 
Motivated by~\eqref{eqn:OU-score-function} that expresses the score function as a conditional expectation, we define the score operator in Hilbert spaces as follows. 
\begin{definition}
The score operator $\Psi: [0, T] \times \mathcal{U} \times \mathcal{A} \to \mathcal{U}$ associated with the SDE~\eqref{eqn:forward-sde-infinite} is defined as
\begin{equation*}
\begin{split}
\Psi(t, u, a) 
&:= -\frac{1}{1 - \text{e}^{-t}} \mathbb{E}_{\mu_{[0, T]}(\cdot|a)}\left[ U(t; a) -\text{e}^{-\frac{t}{2}} U(0; a)  ~|~ U(t; a) = u\right]\,,
\end{split}
\end{equation*}
where $\mathbb{E}_{\mu_{[0, T]}(\cdot|a)}$ 
is the expectation taken with respect to all paths of the OU process $\{U(t; a)\}_{t \in [0, T]}$
conditioned on the input $a \in \mathcal{A}$.     
\end{definition}
To time-reverse the dynamics we define the reverse process $\{V(t; a)\}_{t \in [0, T]}$ by $V(t; a) := U(T - t; a)$. 
The following theorem specifies the governing SDE dynamics of $V(t; a)$.
\begin{theorem}
For any $a \in \mathcal{A}$, assume that
    \[\sup_{t \in [0, T]} \mathbb{E}_{\mu_t(\cdot|a)}\left[\|\Psi(t, U(t; a), a)\|_{\mathcal{U}}^2\right] < \infty\,.\]
    Then $V(t; a)$ is the solution to the following reverse time SDE 
\begin{equation}\label{eqn:reverse-SDE-infinite}
\begin{split}
&d\PSV(t; a) = \left(\frac{1}{2} \PSV(t; a) + \SC(T-t, \PSV(t; a),  a)\right)\, dt + d \widetilde{W}^{\mathcal{U}}(t)\,,\\
        &\PSV(0;a) \sim \mu_T(\cdot | a)\,
\end{split}
\end{equation}
for $t \in [0, T]$,
where $\widetilde{W}^{\mathcal{U}}(t)$ is a $\mathcal{U}$-valued $\mathcal{C}$-Wiener process adapted to the reversed filtration $\{\widetilde{\mathcal{F}}_t\}$.    
\end{theorem}
The proof of the theorem relies on applying the finite-dimensional time reversal results to a projected finite-dimensional subspace and showing that the finite-dimensional dynamics converges to the correct infinite-dimensional dynamics. We refer the reader to~\cite{baldassari2023conditional, baldassari2024taming, pidstrigach2023infinite} for the proof.  
\begin{remark}\label{remark:noise-free-OL}
Now we can give a precise meaning to noise-free operator learning.
For the noise-free case, i.e., $\lambda = 0$, the score operator reduces to
\[\Psi(t, u, a)  = -\frac{1}{1 - \text{e}^{-t}} \left( u -\text{e}^{-\frac{t}{2}} \mathcal{S}^\dagger(a)  \right)\,.\] 
In this case, it can be readily shown that 
\begin{equation}\label{eqn:noise-free-estimates}
\mathbb{E}_{\mu_t(\cdot|a)}\left[ \|\Psi(t, U(t; a), a)\|_{\mathcal{U}}^2 \right] = \frac{1}{1 - \text{e}^{-t}}\text{Tr}(\mathcal{C}), \quad \quad t > 0\,, 
\end{equation}
which converges to infinity as $t \to 0$. 
This suggests that in the noise-free case, the reverse SDE may not be well-defined as $t \to 0$.
However, notice that the theorem holds true on intervals $[t_{\text{start}}, T]$ for any $t_{\text{start}} > 0$. In the noise-free case, the forward SDE transports the true output $U(0; a) = \mathcal{S}^\dagger(a)$ to $U(t_{\text{start}}; a) \approx \mathcal{S}^\dagger(a)$ by a small noise for $t_{\text{start}}$ sufficiently small. Therefore, in practice, we can run DDPM only on the interval $[t_{\text{start}}, T]$ and learn a generative model for sampling $U(t_{\text{start}}; a)$ instead. 
In other words, in the noise-free case, the DDPM model is equivalent to learning a Gaussian distribution with a small standard deviation. 
\end{remark}
To learn the score operator, we can parameterize the operator $\Psi$ by $\Psi_{\theta}$ and optimize the following score matching objective,
\[
\mathcal{L}_{\text{SM}}(\theta) = \frac{1}{T} \int_0^T \ell_{\mathrm{SM}}(t;\theta) \, dt\,,
\]
where
\[
\ell_{\mathrm{SM}}(t;\theta):= \mathbb{E}_{\nu_a}\mathbb{E}_{\mu_t(\cdot|a)}\left[ \|\Psi_{\theta}(t, U(t; a), a) - \Psi(t, U(t; a), a)\|_{\mathcal{U}}^2\right]\,.
\]
In practice, the objective is replaced by a tractable denoising score matching objective.
\begin{theorem}\label{thm:DSM-objective-continuous-infinite}
Suppose that the true score function satisfies that
\[
\sup_{t \in (0, T]}\mathbb{E}_{\nu_{a}}\mathbb{E}_{\mu_{t}(\cdot |a)} \left[\|\SC(t, \PSU(t; a), a)\|_{\mathcal{U}}^2\right] < \infty\,.
\]
Then minimizing $\mathcal{L}_{\text{SM}}(\theta)$ is equivalent to minimizing the following DSM objective,
\begin{equation}\label{eqn:DSM-continuous-infinite}
\mathcal{L}_{\text{DSM}}(\theta) := \frac{1}{T} \int_0^T \ell_{\text{DSM}}(t; \theta) \, dt
\end{equation}
with
\[
\ell_{\text{DSM}}(t; \theta)
:=
\mathbb{E}_{\nu_a}
\mathbb{E}_{\mu_{0, t}(\cdot | a)}
\left[\left\|\SC_{\theta}(t, \PSU(t; a), a) + \frac{1}{1 - \text{e}^{-t}}(U(t; a) -\text{e}^{-\frac{t}{2}} U(0; a)) \right\|_{\mathcal{U}}^2\right]\,,
\]
where 
$\mathbb{E}_{\mu_{0, t}(\cdot | a)}$ is the expectation taken with respect to the joint $(U(0; a), U(t; a))$ conditioned on the input $a \in \mathcal{A}$. 
\end{theorem}
The proof is presented in Appendix~\ref{app:DSM-objective-continuous-infinite} for completeness.
Owing to~\eqref{eqn:noise-free-estimates}, the integrability condition in the above theorem fails for the noise-free case. However, a closer inspection of the proof in Appendix~\ref{app:DSM-objective-continuous-infinite} immediately implies the following result for the noise-free case.
\begin{corollary}\label{cor:DSM-objective-continuous-infinite-Dirac}
When $U(0; a) \sim \delta_{\mathcal{S}^\dagger(a)}(\cdot | a)$ for each $a \in \mathcal{A}$, the DSM objective~\eqref{eqn:DSM-continuous-infinite} reduces to 
\[
\frac{1}{T}\int_0^T
\mathbb{E}_{\nu_a}
\mathbb{E}_{\mu_{t|0}(\cdot | \mathcal{S}^{\dagger}(a))} \left[\left\|\SC_{\theta}(t, \PSU(t; a), a) + \frac{1}{1 - \text{e}^{-t}}(U(t; a) -\text{e}^{-\frac{t}{2}}  \mathcal{S}^{\dagger}(a))\right\|_{\mathcal{U}}^2\right]\,dt,
\]
where $\mathbb{E}_{\mu_{t|0}(\cdot | \mathcal{S}^{\dagger}(a))}$ is the expectation taken with respect to the conditional distribution of $U(t; a)$ given $U(0; a) = \mathcal{S}^{\dagger}(a)$. 
\end{corollary}

To train the model, we discretize the forward SDE~\eqref{eqn:forward-sde-infinite} to obtain the forward Markov chain $\{U_{0:N}(a)\}$ and train the score operator $\Psi_{\theta}$ by minimizing the discrete objective 
$\mathcal{L}_{\mathrm{DDSM}}(\theta)$ given by
\[
\frac{1}{N}\sum_{n=1}^N\mathbb{E}_{\nu_{a}}
\mathbb{E}_{\bar{\mu}_{0, n}(\cdot | a)}
\left[
\left\|
\SC_{\theta}(n, \PSU_n(a), a) + \frac{1}{1 - \text{e}^{-n\Delta t}}(U_n(a) -\text{e}^{-\frac{n \Delta t}{2}} U_0(a))
\right\|_{\mathcal{U}}^2\,,
\right] 
\]
where $\mathbb{E}_{\bar{\mu}_{0, n}(\cdot | a)}$ denotes the expectation taken with respect to the joint distribution of $(U_0(a), U_n(a)) $ conditioned on $a \in \mathcal{A}$.
For inference from the model, a discretization of the reverse time process~\eqref{eqn:reverse-SDE-infinite} leads to a generative model for sampling from the desired distribution $\mathcal{N}(\mathcal{S}^\dagger(a), \lambda \mathcal{C})$. 

The functional space formulation establishes the theoretical foundation for the discretization invariant property of DDPM. For practical implementation, however, suitable neural network architectures are needed to approximate the score-matching operator $\Psi$ in infinite-dimensional domains. Unfortunately, the default UNet architecture used in DDPM is not resolution-independent. Neural operators, such as FNO and ONet, are powerful tools for approximating operators in functional spaces, \cite{FNO, ONET}. Yet, our numerical experiments with FNO did not yield accurate simulation results, suggesting that the UNet architecture may play a critical role in the success of DDPM. As an alternative, attention-based architectures combined with UNet could potentially enable resolution invariance during training, which we leave for future work.

%
%

\section{Computational benchmarks}\label{sec:experiments}
We demonstrate the efficacy of PDNO on several benchmarks and
provide a comparison with the results obtained by FNO~\cite{FNO}. 
Denote $\widetilde{D}$ as the test data set.
We evaluate the prediction accuracy and uncertainty of the model $\sopa$ 
by the mean relative $L_2$-error (MRLE) and the mean standard deviation (MSTD) that are defined by
\begin{equation*}
\frac{1}{|\mathcal{\widetilde{D}}|} \sum_{(\BA, \BU) \in \mathcal{\widetilde{D}}} \frac{\|\mbox{mean}(\{\BV_0^{(m)}(\BA)\}_{m=1}^{M_{\text{infer}}}) - \BU \|_2}{\|\BU\|_2}\, \quad\mbox{and}\quad 
\frac{1}{|\mathcal{\widetilde{D}}|} \sum_{(\BA, \BU) \in \mathcal{\widetilde{D}}}
\frac{\|\mbox{std}(\{\BV_0^{(m)}(\BA) \}_{m=1}^{M_{\text{infer}}}) \|_2}{M_{\text{infer}}}\,,
\end{equation*}
respectively. 
All experiments are implemented with a single Tesla T4 GPU.
More implementation details about the experiments are given in Appendix~\ref{sec:detailed-experiments}.

\subsection{Elliptic equation in 1D}
We start with a problem
whose analytical solution is known explicitly:
\begin{equation}\label{eqn:1d-problem}
\begin{split}
-(a(x) u^{\prime}(x))^{\prime} &= 0\,, \qquad x \in (-1, 1)\,,
\end{split}
\end{equation}
with the boundary condition $u(-1) = 0$ and $u(+1) = 1$, and
$u^\prime$ denotes $du/dx$. We are interested in learning the 
solution operator $\mathcal{S}^{\dagger}$ mapping a log-normal field $a \in \mathcal{A} = L_2((-1, 1); \mathbb{R})$ to the solution field $u \in \mathcal{U} =  L_2((-1, 1); \mathbb{R})$. 
The random input field in this example corresponds to
the log-normal field 
$a(x) = \mathrm{e}^{\beta V(x)}$ with $\beta = 0.1$, 
where the potential $V$ is parameterized by a random vector
$(A_1, \ldots, A_n, B_1, \ldots, B_n)$, namely,
$$
V(x) = \frac{1}{\sqrt{n}} \sum_{k=1}^{n} A_k \cos(\pi k x) + B_k \sin(\pi k x)\,,
$$ 
where $A_k$ and $B_k$
are independent standard normal random variables. 
In this example, we choose $n = 5$ and thus the log-normal random field is parameterized by $10$ independent Gaussians. 
One can easily verify
that $V(x)$ is a Gaussian random field with zero mean and the
covariance kernel
\[
\mathrm{cov}(x_1, x_2) := \mathbb{E}[V(x_1) V(x_2)] =  n^{-1}\sum_{k=1}^{n} \cos(\pi k (x_2 - x_1))\,.
\]
Moreover, by direct calculation, it can be easily verified that the exact solution of the
problem~\eqref{eqn:1d-problem} is
\[
u(x) = \left(\int_{-1}^{1} a^{-1}(\xi)\, d\xi\right)^{-1} \int_{-1}^{x} a^{-1}(\xi)\, d\xi, \qquad x \in (-1, 1).
\]

We test the performance of PDNO on a Gaussian noise data set. 
The noisy data set $\mathcal{D}_{\eta} = \{(\BA^{(n)}, \BU^{\eta, (n)})\}_{n=1}^{M_{\text{train}}}$ with $M_{\text{train}} = 10^4$ is generated by corrupting the output $\BU$ with a Gaussian noise $\eta \sim \mathcal{N}(0, \sigma^2 \mathbf{I})$, i.e., $\BU^{\eta} = \BU + \eta$.
We repeat the data generation process for $\sigma = 1\%, \sigma = 5\%, \sigma = 100\%$.
We aim to learn the mapping $s_{\theta}: \mathbb{R}^{128} \to \mathbb{R}^{128}$ as well as the standard deviation of the noise $\eta$.
The set of hyper-parameters used for training the PDNO is summarized in Table~\ref{tab:params-test1}.
In Figure~\ref{fig:1d_noisy}, we show the learned solution $u(\cdot)$ for a given input realization $a$ in the setting of $\sigma = 1\%$ (left) and $\sigma = 5\%$ (right).
The result demonstrates that PDNO is able to learn both the mean and the standard deviation from a noisy data set, which is a key advantage of PDNO over other deterministic methods.

\begin{figure}
    \centering
    \includegraphics[width=0.9\textwidth]{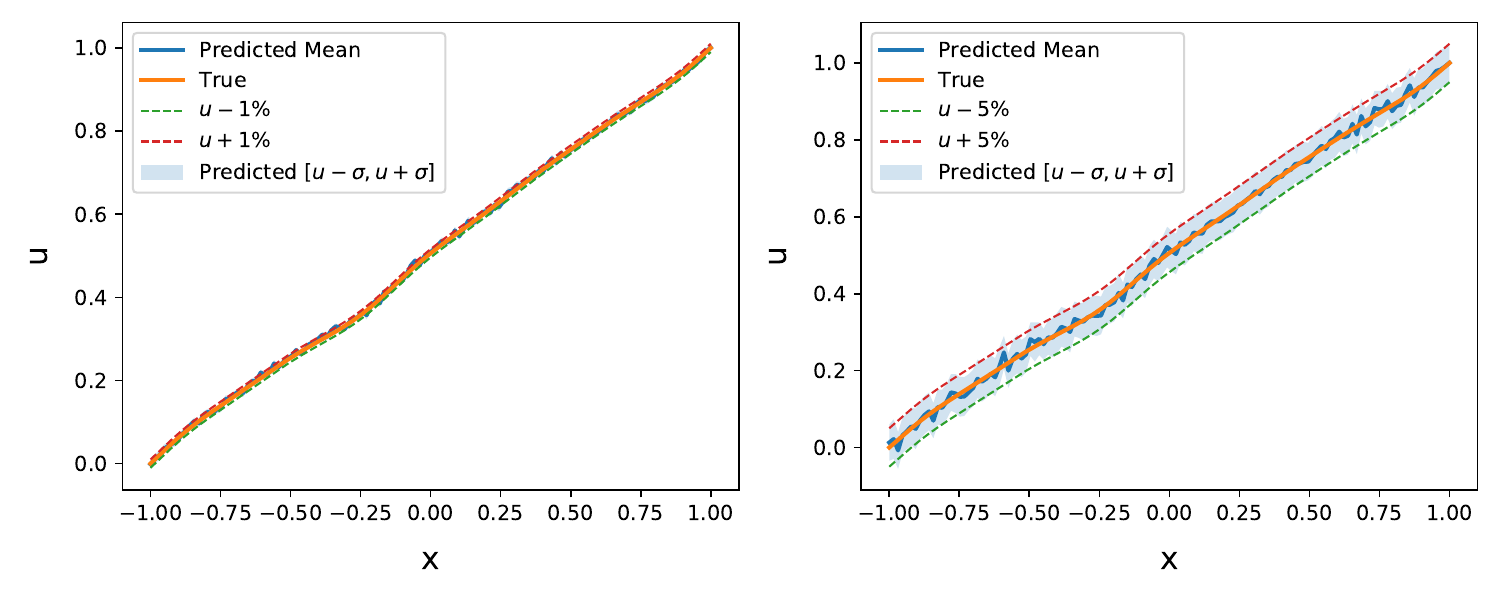}
    \caption{Solutions to the 1D elliptic equation learned by PDNO.
    The model is trained on a data set corrupted by an additive Gaussian noise $\mathcal{N}(0,\sigma^2 \mathrm{I})$.
    {\bf Left}: the predicted mean and confidence interval of the solution $u$ corresponds to a particular input $a$. The model is trained on the noisy data set with $\sigma = 1\%$. 
    {\bf Right}: the predicted mean and confidence interval of the solution $u$ corresponds to a particular input $a$. The model is trained on the noisy data set with $\sigma = 5\%$.
    The predicted confidence intervals (blue shaded band) learned by PDNO coincide with the theoretical confidence intervals (lower bound and upper bound indicated by green dashed line and red dashed line, respectively).}
    \label{fig:1d_noisy}
\end{figure}

Finally, we train our model on a highly noisy data set with $\sigma = 100\%$. Figure~\ref{fig:1d_large_noise} presents the learned solution and the corresponding learned standard deviation for two input realizations $\BA$ from the test set. To illustrate the typical magnitude of noise affecting the training data, we artificially add standard Gaussian noise to the true solution $\BU$. The results clearly indicate that PDNO successfully learns the solution even under such extreme noise levels.


\begin{figure}
    \centering
    \includegraphics[width=0.8\linewidth]{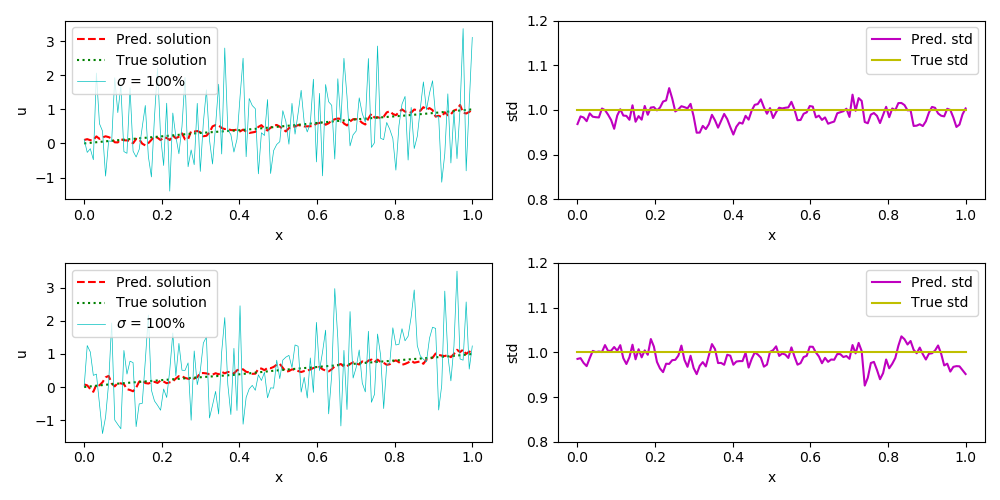}
    \caption{Solution to the 1D elliptic equation trained on a data set corrupted by an additive Gaussian noise $\mathcal{N}(0, \mathrm{I})$.
    Both the predicted solutions (left) and the standard deviations (right) for two given realizations of the input $a$ are shown. A standard Gaussian noise ($\sigma = 100\%$) is added to the true solution $\BU$ to demonstrate the magnitude of the perturbation. }
    \label{fig:1d_large_noise}
\end{figure}

\subsection{Elliptic equation in 2D}
To test the method on higher-dimensional input-output data sets, we apply the proposed 
PDNO to learn a solution operator 
\[\SOP:a \in L^{\infty}(D; \mathbb{R})\mapsto u \in H_0^1(D; \mathbb{R})\] to
a 2D elliptic PDE:
\begin{equation}\label{eqn:2d-elliptic}
\begin{split}
-\nabla \cdot (a(x) \nabla u(x)) &= f(x)\, \qquad x \in D\\
u(x) &= 0\, \qquad\;\;\;\;\; x \in \partial D
\end{split}
\end{equation}
where the random input $a$ is
obtained from another Gaussian random field $\tilde a$ by
\[
a(x) = \alpha + (\beta-\alpha) \IND{\{\tilde{a}(x)\geq 0\}}, \qquad
\tilde a \sim \mathcal{N}(0, (-\Delta + \tau^2)^{-r}).
\]
See Appendix~\ref{app:Wiener-process} for the precise meaning. 
The Gaussian field $\tilde a$ has mean zero and covariance operator 
$\mathcal{C} = (-\Delta + \tau^2)^{-r}$, where $\Delta$ is the Laplace operator
with Neumann boundary conditions on $D$. Note that $\tau$ defines
the inverse correlation length and $r$ controls the regularity of the field.
In correspondence to experiments reported in \cite{FNO} we have chosen $\tau = 3$, $r=2$, and $\alpha=4$, $\beta=12$. 
The right-hand side is $f(x)\equiv 1$.
All input-output data pairs $(a, u)$ in this example are represented
by their grid values $(\BA, \BU)$ on the mesh of size $d_{\mathcal{U}}$, thus $\BA, \BU \in \mathbb{R}^{d_{\mathcal{U}}}$.

\subsubsection{Noise-free data set}
We use a data set $\mathcal{D}$ of size $M_{\text{train}} = 1000$ for training.
The size of
meshes used in the experiments and the corresponding prediction results are reported in Table~\ref{tab:darcy_error}.
The set of parameters used for training the PDNO is summarized in Table~\ref{tab:params-test2}.



As shown in Table~\ref{tab:darcy_error}, in the noise-free data setting, PDNO achieves lower MRLE
than FNO for various mesh sizes. 


\begin{figure}[!h]
    \centering
    \includegraphics[width=1.0\textwidth]{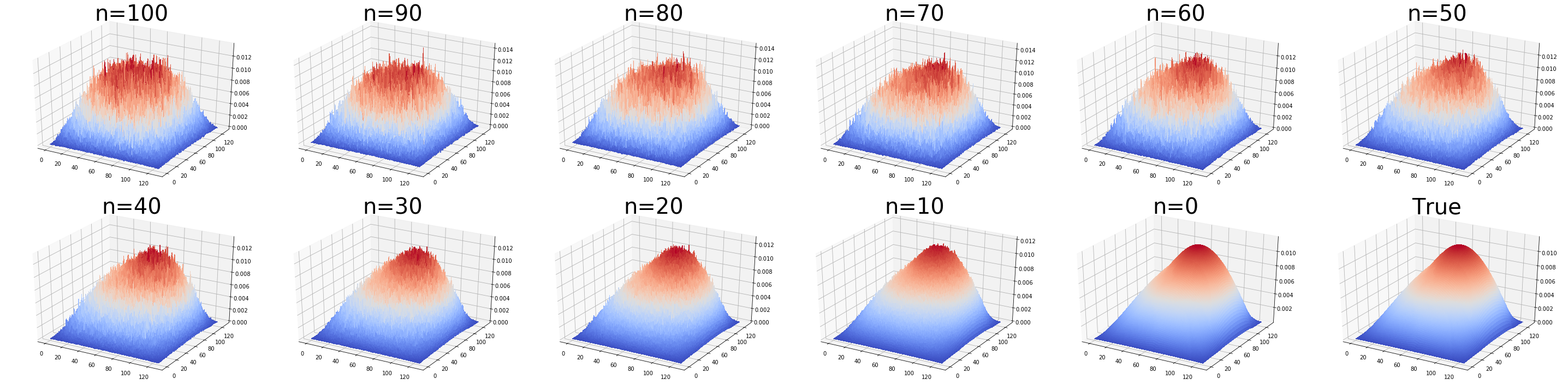}
    \caption{The reverse time evolution of PDNO applied to the elliptic equation in 2D for a given input field $a$. Starting from a standard 2D Gaussian sample at $n  = N = 100$, the plot shows the intermediate output samples after every $10$ steps. The final sample at $n = 0$ corresponds to the solution learned by PDNO.}
    \label{fig:darcyflow_history}
\end{figure}

\begin{table}[!h]
\caption{MRLE and MSTD for the 2D elliptic PDE in Test 2.}
\begin{center}
  \begin{tabular}{llcccccc}
    \toprule
    \multirow{2}{*}{Method} &
    \multirow{2}{*}{Parameters} &
      \multicolumn{2}{c}{$d_{\mathcal{U}}=64^2$} &
      \multicolumn{2}{c}{$d_{\mathcal{U}}=128^2$} &
      \multicolumn{2}{c}{$d_{\mathcal{U}}=256^2$} \\
      & & {MRLE} & {MSTD} & {MRLE} & {MSTD} & {MRLE} & {MSTD} \\
      \midrule
    FNO & 2, 376, 449 & 0.0060 & N/A & 0.0078 & N/A & 0.0050 & N/A \\
    PDNO &  35, 707, 841 & 0.0046 & $5.08$e$-07$ & 0.0022 & $3.52$e$-07$ & 0.0037 & $1.21$e$-05$ \\
    \bottomrule
  \end{tabular}
\end{center}
\label{tab:darcy_error}
\end{table}
\subsubsection{Noisy data set} 
In this example, we present a study of learning the solution operator along with an uncertainty estimate
from a noisy data set $\mathcal{D}_{\eta}$ in which the output data points $\BU$ are 
corrupted by a $\mathbb{R}^{d_{\mathcal{U}}}$-variate Gaussian mixture noise
\[
\eta \sim \alpha \sum_{i=1}^{n_g} \pi_i \mathcal{N}(\mu_i, \sigma_i^2 \mathbf{I})\,, 
\]
where the constant $\alpha = 0.2 \, u_\mathrm{max}$ is chosen to control the relative error of the perturbation. The maximum value of the exact solution $u_\mathrm{max} = 0.01$ over the prescribed mesh 
is the maximum value of the true solution $u$ over the prescribed mesh. 
Parameters used in the experiment are summarized in Table~\ref{tab:noisydata}. The Gaussian mixture noise $\eta$ leads to an $8\%$ and $16\%$ perturbation to the true output $\BU$ in case 1 and case 2, respectively.

\begin{table}[!h]
\caption{Operator learning from the noisy data set: 2D elliptic problem. The training data are perturbed by
a mean-zero additive noise defined by a Gaussian mixture.
The relative error refers to fluctuations at $u_{\max}$.}
\begin{center}
  \begin{tabular}{cccc}
    \toprule
    Mixture &\multirow{2}{*}{Case 1}  & \multirow{2}{*}{Case 2} \\
    parameters &                      &          \\
     \midrule
    $\pi_i$ & $[1/3,1/3,1/3]$ & $[1/4,1/4,1/4,1/4]$ \\
    $\mu_i$ & $[-0.5,0,0.5]$  & $[-1, -0.5, 0.5, 1]$ \\
    $\sigma_i$ & $[0.01,0.01,0.01]$ & $[0.01,0.1,0.2,0.05]$ \\
    \midrule
    Standard dev. & $8.167 \times 10^{-4}$ & $1.598\times 10^{-3}$ \\
    Rel. error & $\approx 8\%$ & $\approx 16\%$  \\
    \bottomrule
  \end{tabular}
\end{center}
\label{tab:noisydata}
\end{table}

 \begin{figure}
 \centering
 \begin{subfigure}[b]{0.48\textwidth}
 \centering
    \includegraphics[width=\textwidth]{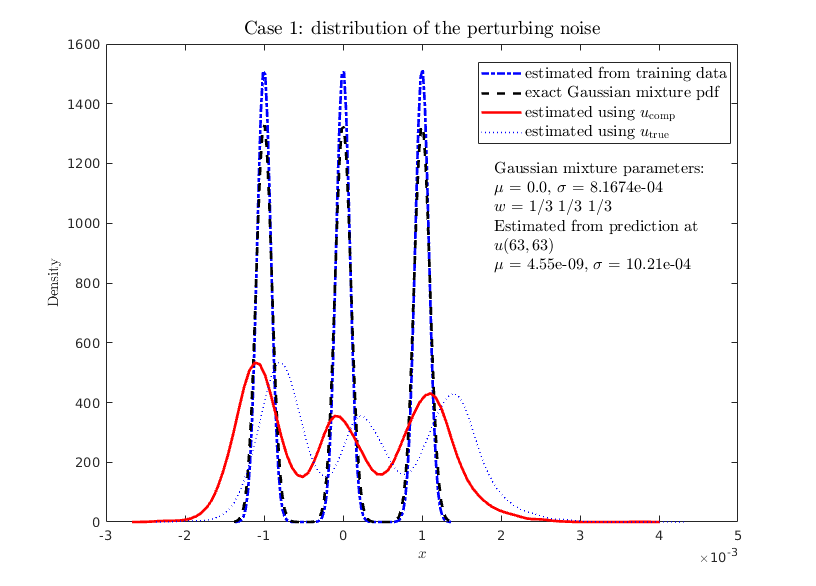}
    \captionsetup{font=small} 
        \caption{Case 1}
 \end{subfigure}
 \begin{subfigure}[b]{0.48\textwidth}
 \centering
    \includegraphics[width=\textwidth]{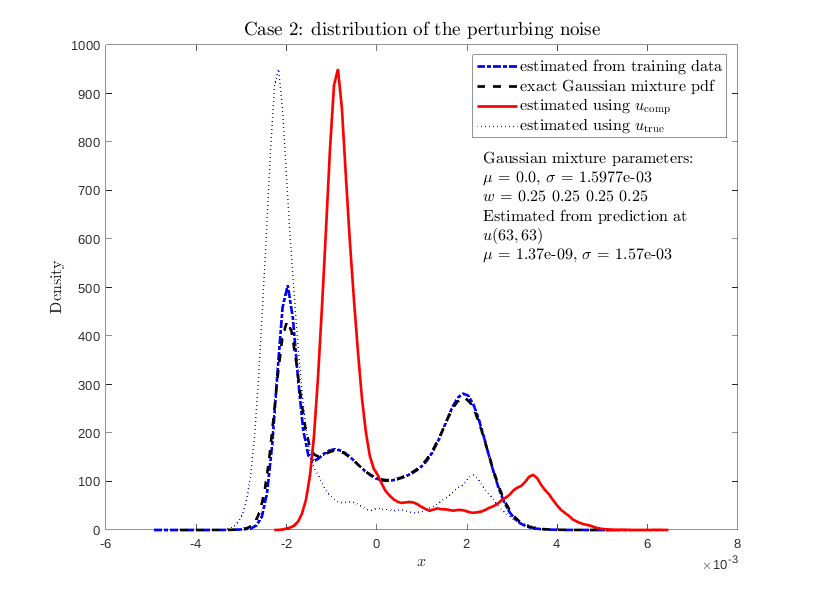}
    \captionsetup{font=small} 
        \caption{Case 2}
 \end{subfigure}

 \caption{Kernel density estimation of the distribution of the noise for the studied cases.}
 \label{fig:noise_distribution}
 \end{figure}

For a given input $a$, our simulation aims to recover the noise distribution of $\eta$ at $a$. For inference, we generate $M_{\text{infer}} = 10^4$ samples from the learned DDPM model and use the kernel density estimation (KDE) to obtain the estimated density function. 
We present the results in Figure~\ref{fig:noise_distribution} where we recover the noise $\eta$ at the center of the $128 \times 128$ grid (i.e., index $(63, 63)$). In Case 1 where the output is corrupted by a mixture of three Gaussians, our PDNO is able to recover the 3-modal pattern of the mixture distribution. In Case 2, with a more challenging noise perturbation, PDNO learned the correct first two moments of $\eta$. However, it failed to fully capture the multi-modality pattern.


\subsection{Burgers' equation}
We consider Burgers' equation with periodic boundary conditions 
on the spatial domain $D=(0,1)$,
\begin{equation}\label{eqn:burgers}
\begin{split}
\partial_t u(x, t) + u(x, t)\partial_x u(x, t) &= 
\nu \partial_{xx} u(x, t), \qquad x \in D\,,\;\;\; t \in (0, 1]\\
u(x, 0) &= u_0(x), \qquad x \in D
\end{split}
\end{equation}
where $\nu$ is the viscosity coefficient, which is set to $1.0$.
In this experiment, the random input $a$ is the initial condition $u_0 \in L_{\mathrm{per}}^2(D; \R)$ that is chosen to be a zero mean Gaussian random field with the covariance operator $625(-\Delta +25 I)^{-2}$ with Neumann boundary conditions. 
Similar to~\cite{FNO}, we aim to learn the operator 
$\SOP: L_{\mathrm{per}}^2(D; \R) \to L_{\mathrm{per}}^p(D; \R)$, for $p>1$,
mapping the initial condition $u_0$ to the solution $u(\cdot,1)$. 
We use the same data set
$\mathcal{D}$ provided in \cite{FNO}
in order to make a performance comparison of our PDNO with the FNO. The set of hyper-parameters used for training PDNO is summarized in Table~\ref{tab:params-test3}.


The prediction result is summarized in Table~\ref{tab:burgers} and the decay of 
MRLE is shown in Figure~\ref{fig:error_vs_epoch}.
We observed that FNO achieves the claimed mesh invariant property in this example. Although PDNO achieves low MRLE for the experiment, FNO performs significantly better than PDNO on various mesh sizes. 
We argue that the great performance of FNO for this example is due to the periodic boundary conditions used in the equation. 


\begin{table}[!h]
\caption{MRLE and MSTD on Burgers' equation in Test $3$.}
\begin{center}
  \begin{tabular}{llcccccc}
    \toprule
    \multirow{2}{*}{Method} &
    \multirow{2}{*}{Parameters} &
      \multicolumn{2}{c}{$d_{\mathcal{U}}$=512} &
      \multicolumn{2}{c}{$d_{\mathcal{U}}$=1024} &
      \multicolumn{2}{c}{$d_{\mathcal{U}}$=2048} \\
      & & {MRLE} & {MSTD} & {MRLE} & {MSTD} & {MRLE} & {MSTD} \\
      \midrule
    FNO & $582, 849$ &  $0.0005$ & N/A & $0.0005$ & N/A & $0.0005$ & N/A \\
    PDNO & $3, 980, 129$ & $0.0047$ & $5.47$e$-04$ & $0.0053$ & $7.93$e$-04$ & $0.0152$ & $1.51$e$-03$ \\
    \bottomrule
  \end{tabular}
\end{center}
\label{tab:burgers}
\end{table}

\subsection{Advection equation}
This example aims to investigate the performance 
of PDNO
for an advection PDE whose solution is highly non-smooth.
We test our method for the 1D advection equation with the inflow condition 
on the domain $D=(0,1)$:
\begin{equation}
    \begin{split}
        &\partial_t u(x,t) +  \partial_x u(x,t) = 0   \qquad x \in D\,,\\
        &u(x,0) = u_0(x)\,,\;\;\;\mbox{and \;\;$u(0,t) = 0$ \qquad $t\geq 0$.}
    \end{split}
\end{equation}
The solution operator learned in this problem is the mapping from the initial condition $u_0$ to a solution at a fixed time $t_f>0$, i.e., 
\[\SOP:u_0 \in L^{\infty}(D; \mathbb{R})\mapsto u(\cdot,t_f) \in L^{\infty}(D; \mathbb{R}).\]
The random input field $u_0$ of the data set is identified with the initial condition
\[
u_0(x) = \IND{\{\tilde{a}(x) \geq 0\}}\,,
\]
where $\tilde{a}(\cdot)$ is a Gaussian random field of mean zero and
with the covariance kernel $(-\partial_{xx} + \tau^2)^{-r}$
\[
\tilde{a} \sim \mathcal{N}(0, (-\partial_{xx} + \tau^2)^{-r}).
\]
Here, the Laplace operator
$\partial_{xx}$ is with the zero Neumann boundary conditions.
For comparison with the FNO method and results in \cite{de2022cost}, we use the same values $\tau = 100$ and $r=2$. 
The output field of the data set is the solution $u(x,t_f)$.
All input-output pairs in this example are represented
by their grid values on the mesh of the size $d_{\mathcal{U}}$.
The set of parameters used for training PDNO for this example is summarized in Table~\ref{tab:params-test4}.

\subsubsection{Noise-free data set}
For the noise-free setting, we generate $N = 1500$ input-output training pairs 
$\mathcal{D} = \{(\BA^{(n)}, \BU^{(n)})\}_{n=1}^{M_{\text{train}}}$ with the mesh size $d_{\mathcal{U}} = 1024$ and sub-sample to obtain the data at a lower resolution.
We benchmark PDNO with the FNO developed in~\cite{FNO} on a test data set $\widetilde{\mathcal{D}}$ of $M_{\text{test}} = 300$ input-output pairs.
The size of
meshes used in the experiments and the corresponding prediction results are reported in Table~\ref{tab:adv_error}.
In this highly discontinuous example, PDNO significantly outperforms FNO in all mesh sizes.
For the mesh size $d_{\mathcal{U}}=1024$, 
both the PDNO and FNO solutions at two particular inputs $a$ from $\widetilde{\mathcal{D}}$ are presented in~Figure~\ref{fig:advection}.
The decay of MRLE with respect to the number of training epochs for the advection problem is shown in Figure~\ref{fig:error_vs_epoch} (right).
Compared to deterministic methods such as FNO, the experiment demonstrates that PDNO is able to learn 
very fine details of the highly non-smooth solution. 
Finally, Figure~\ref{fig:advection_history} demonstrates how PDNO transports a Gaussian sample to the output solution $u$ corresponding to a given input $a$ in $N = 100$ steps.

\begin{figure}[!h]
    \centering
    \includegraphics[width=1.0\textwidth]{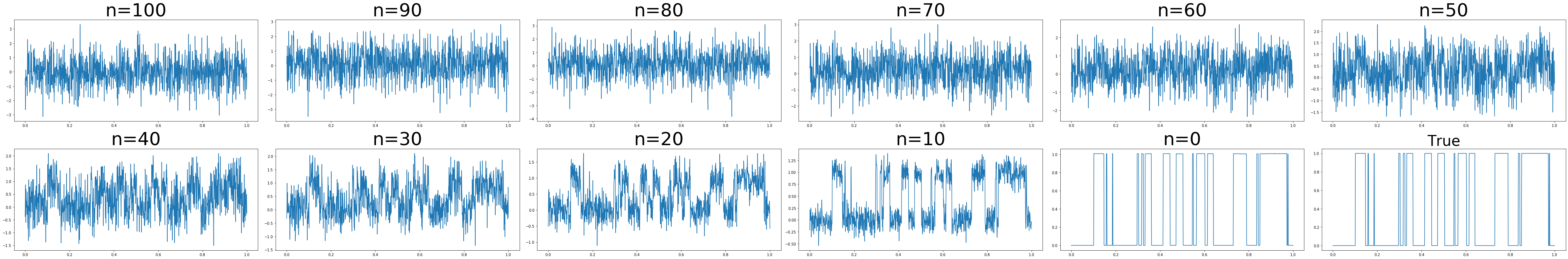}
    \caption{The reverse time evolution of PDNO applied to the advection problem for a given input field $a$. Starting from a standard Gaussian sample at $n = N = 100$, the plot shows the intermediate output samples after every $10$ steps. The final sample at $n = 0$ corresponds to the solution learned by PDNO.}
    \label{fig:advection_history}
\end{figure}

\begin{table}[!h]
\caption{MRLE and MSTD for the Advection problem in Test 4.}
\begin{center}
  \begin{tabular}{llcccccc}
    \toprule
    \multirow{2}{*}{Method} &
    \multirow{2}{*}{Parameters} &
      \multicolumn{2}{c}{$d_{\mathcal{U}}$=256} &
      \multicolumn{2}{c}{$d_{\mathcal{U}}$=512} &
      \multicolumn{2}{c}{$d_{\mathcal{U}}$=1024} \\
      & & {MRLE} & {MSTD} & {MRLE} & {MSTD} & {MRLE} & {MSTD} \\
      \midrule
    FNO & $2, 328, 961$ &  $0.0514$ & N/A & $0.1072$ & N/A & $0.1325$ & N/A \\
    PDNO & $3, 980, 129$ & $0.0062$ & $7.91$e$-04$ & $0.0081$ & $7.38$e$-04$ & $0.0445$ & $2.59$e$-03$ \\
    \bottomrule
  \end{tabular}
\end{center}
\label{tab:adv_error}
\end{table}


\begin{figure}[!h]
    \centering
    \includegraphics[width=1.0\textwidth]{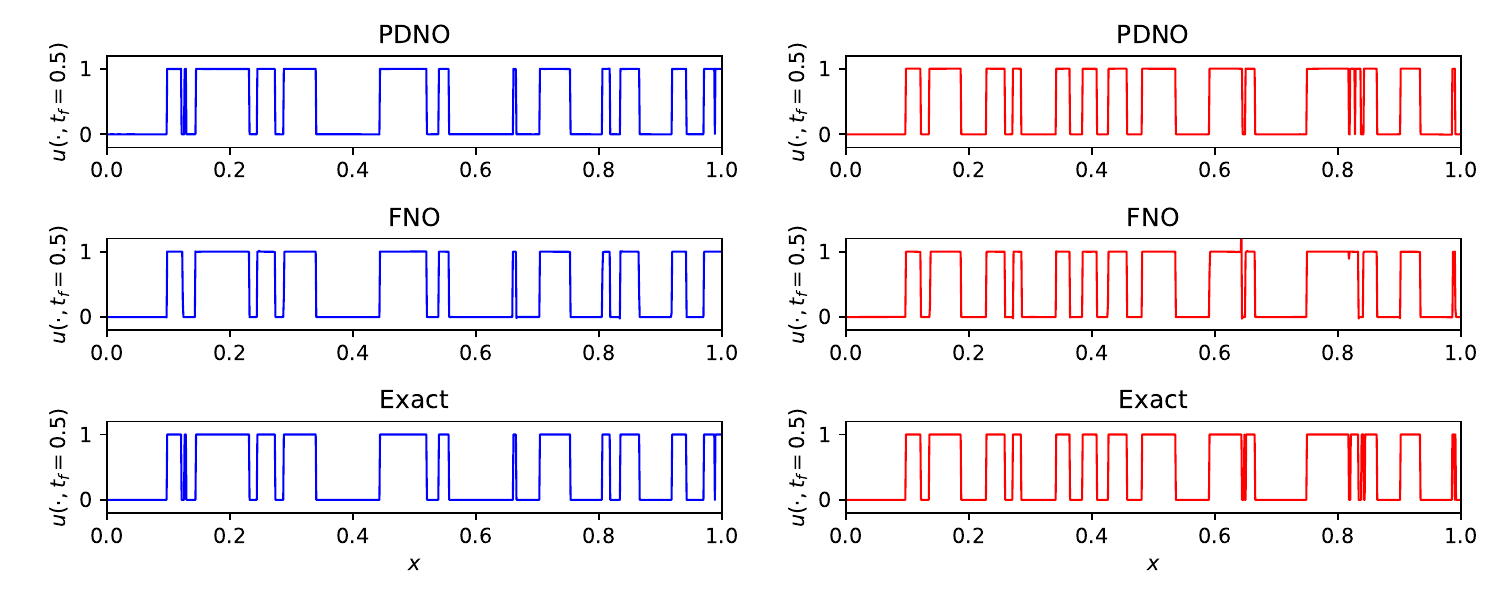}
    \caption{Solutions to the Advection problem at $t_f = 0.1$, i.e., $u(\cdot, 0.1)$ predicted by learned models for particular initial condition inputs $u_0$ from the test data set. 
    {\bf Left}: 
    solution $u(\cdot, 0.5)$ corresponds to $u_0$ for which PDNO solution exhibits the lowest MRLE.
    {\bf Right}: solution $u(\cdot, 0.5)$ corresponds to $u_0$ for which PDNO solution exhibits the largest MRLE.}
    \label{fig:advection}
\end{figure}



\subsubsection{Noisy case}
For the noisy setting, we synthesize a noisy training data set $\mathcal{D}_{\eta} = \{(\BA^{(n)}, \BU^{\eta, (n)})\}_{n=1}^{M_{\text{train}}}$ by adding a Gaussian noise $\eta \sim \mathcal{N}(0, \sigma^2 \mathbf{I})$ to the output $\BU$, i.e., $\BU^{\eta} = \BU + \eta$.
To demonstrate that PDNO is capable of learning from a highly noisy dataset, we set $\sigma = 50\%$, which completely destroys the pattern of the true output solution. Two typical input-output samples in $\mathcal{D}_{\eta}$ are shown in Figure~\ref{fig:noisy_advection_sample}, which shows that the outputs are highly noisy. In Figure~\ref{fig:learned_solutions_advection}, the solutions learned by PDNO are compared with the noise-free outputs corresponding to the input-output pairs shown in Figure~\ref{fig:noisy_advection_sample}.  
It can be seen that PDNO manages to recover the $0-1$ pattern of the solution to the advection equation, and the prediction inaccuracy is primarily due to the Gibbs phenomenon. 
The number of PDNO ensembles we use to generate these predicted solutions is
$M_{\text{infer}} = 10^4$ and the noise can be further reduced by increasing $M_{\text{infer}}$.

\begin{figure}
    \centering
    \includegraphics[width=1.0\linewidth]{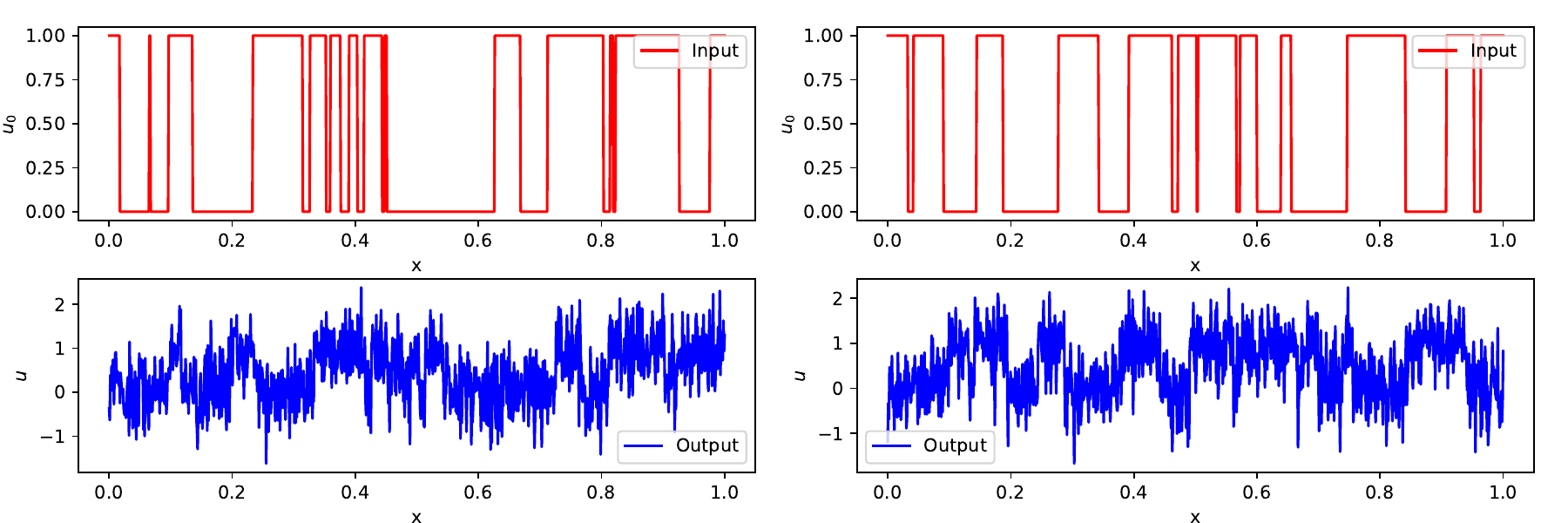}
    \caption{Two typical input-output samples from the noisy data set for training. Top: the input initial condition $u_0$; Bottom: the corresponding noisy output solution $u$ corrupted by $\eta \sim \mathcal{N}(0, \sigma^2 \mathbf{I})$ with $\sigma = 50\%$.}
    \label{fig:noisy_advection_sample}
\end{figure}

\begin{figure}
    \centering
    \includegraphics[width=1.0\linewidth]{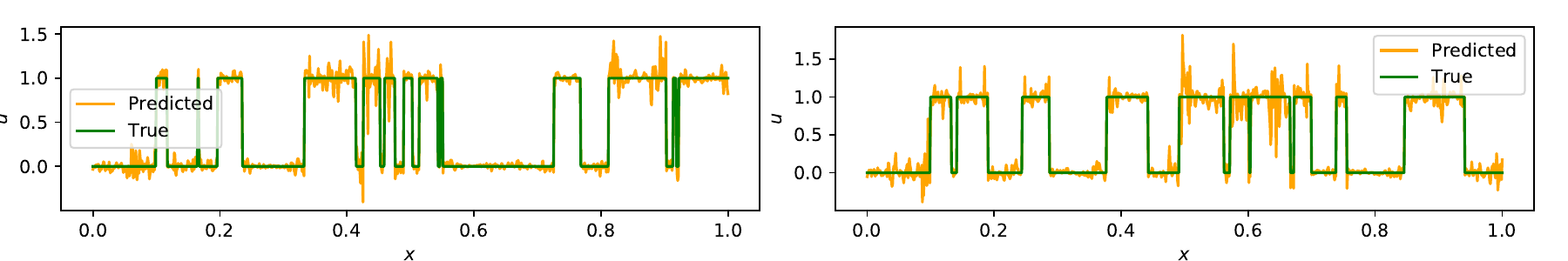}
    \caption{The two learned output solutions corresponding to the inputs shown in Figure~\ref{fig:noisy_advection_sample}.}
    \label{fig:learned_solutions_advection}
\end{figure}


\begin{figure}
    \centering
    \includegraphics[width=0.95\textwidth]{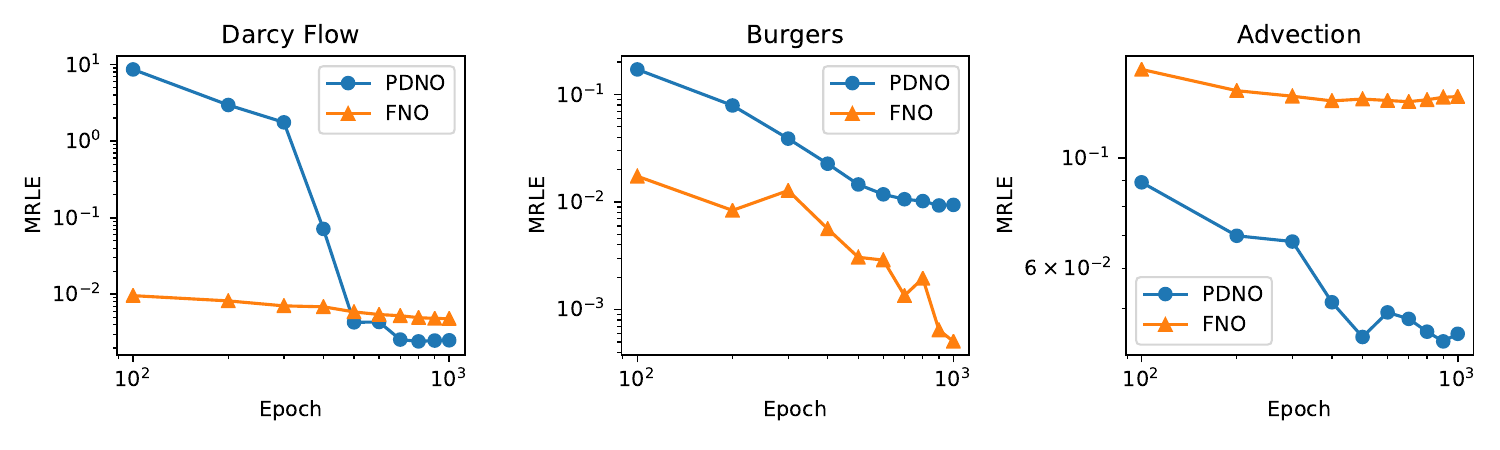}
    \caption{Decay of the MRLE with respect to the number of training epochs.
    {\bf Left:} 2D elliptic PDE with $d_{\mathcal{U}} = 128$. 
    {\bf Center:} Burgers' equation with $d_{\mathcal{U}} = 1024$.
    {\bf Right:}  Advection PDE with $d_{\mathcal{U}} = 1024$.}
    \label{fig:error_vs_epoch}
\end{figure}

\section*{Conclusion}\label{sec:conclusion}
We have presented a probabilistic generative model based on DDPM for learning the solution operator of parametric PDEs. 
Our method differs from other state-of-the-art approaches in that we view the input-to-output mapping as a family of conditional distributions conditioned on input parameters. 
Therefore, our model predicts the solution with quantified uncertainty. While the method has various advantages, we also observed several limitations in numerical experiments. 

More specifically, the current method learns a model that is trained on a predefined discretization mesh, which limits the ability to predict solutions at points outside the mesh. More importantly, the mesh-dependent property requires us to retrain the model in order to predict over different resolution meshes.
Furthermore, since sampling of $u|a$ requires sequential sampling for $N$ steps, the inference of our model is relatively slow compared to other deterministic models.

To deal with the mesh-dependent issue, one promising approach is to directly formulate the diffusion model over functional spaces. 
Alternatively, it is possible to carry out diffusion learning over an appropriate latent space~\cite{rombach2022high}, e.g., over the Fourier space~\cite{FNO} or the spectral space~\cite{phillips2022spectral}.
Finally, we can take advantage of the cascaded diffusion model pipeline to make predictions on a fine mesh based on a model pre-trained on a coarse mesh~\cite{ho2022cascaded}.
As for the sampling efficiency, the denoising diffusion implicit models can be applied to accelerate the inference with a slight compromise in accuracy~\cite{song2020denoising}.
Finally, the current implementation utilizes a UNet architecture to represent the transition density, which often involves a larger number of parameters than the architecture for FNO. 
A more efficient DDPM-based solver
calls for a special-purpose architecture for the operator learning application.

On the other hand, the probabilistic framework is directly applicable to learning solution operators from {\it noisy data sets}. The sampling nature of the solution prediction then allows for characterizing the uncertainty due to model training on noise-corrupted data. In numerical benchmarks we were able to partially recover corrupting noise distribution and/or its moments.  

We evaluated the computational performance of the proposed PDNO method against Fourier Neural Operators. Studied numerical benchmarks
showed that PDNO achieves comparable accuracy while additionally recovering the noise magnitude in data sets corrupted by additive noise.

\bibliographystyle{amsplain}
\bibliography{diffusion_model}

%
%
\appendix

\numberwithin{equation}{section}
\makeatletter 
\newcommand{\section@cntformat}{Appendix \thesection:\ }
\makeatother

\setcounter{table}{0}
\renewcommand\thetable{A\arabic{table}}

\setcounter{figure}{0}
\renewcommand\thefigure{A\arabic{figure}}



%
%
\section{Proof of Theorem~\ref{thm:DSM-objective-continuous}}\label{app:remainder-R}
Write the SM objective~\eqref{eqn:SM-objective} (before averaging with respect to $\BA$ and time) as
\begin{equation}\label{eqn:app-SM}
\begin{split}
&\mathbb{E}_{\rho_t(\BU | \BA)} \left[\|\SC_{\theta}(t, \PSU(t; \BA), \BA) - \SC(t, \PSU(t; \BA), \BA)\|^2\right]\\
=&
\mathbb{E}_{\rho_t(\BU | \BA)} 
\left[
\|\Psi_{\theta}(t, \PSU(t; \BA), \BA)\|^2
\right]
+ 
\mathbb{E}_{\rho_t(\BU | \BA)} 
\left[
\|\Psi(t, \PSU(t; \BA), \BA)\|^2
\right]\\
&~~~~-
2\mathbb{E}_{\rho_t(\BU | \BA)} 
\left[
\langle \Psi_{\theta}(t, \PSU(t; \BA), \BA), \Psi(t, \PSU(t; \BA), \BA) \rangle 
\right],
\end{split}
\end{equation}
Note that the last term can be further expanded into
\begin{equation*}
    \begin{split}
        &\mathbb{E}_{\rho_t(\BU | \BA)}\left[\left\langle \Psi_{\theta}(t, \PSU(t; \BA), \BA), \frac{\nabla_{\BU} \rho_t(U(t; \BA) | \BA)}{\rho_t(U(t; \BA) | \BA)}\right\rangle  \right]\\
        =&
        \int_{\BU} \int_{\BU_0} \left\langle \Psi_{\theta}(t, \BU, \BA), \nabla_{\BU} \log \rho_{t|0} (\BU | \BU_0)\right\rangle \rho_{t|0} (\BU | \BU_0) \rho_0 (\BU_0 | \BA)\, d\BU_0 d\BU\\
        =&
        \mathbb{E}_{\rho_0(\BU_0 | \BA)}\mathbb{E}_{\rho_{t|0}(\BU | \BU_0)}\left[\left\langle \Psi_{\theta}(t, \PSU(t; \BA), \BA), \nabla_{\BU} \log \rho_{t|0}(\PSU(t; \BA) | \PSU(0; \BA))\right\rangle  \right]\,,
    \end{split}
\end{equation*}
where 
$\rho_{t|0} (\BU | \BU_0)$\footnote{Note that when conditioned on $\PSU_0 = \BU_0$, $\rho_{t|0} (\BU | \BU_0) = \rho_{t|0} (\BU | \BU_0, \BA)$ and hence the omission of $\BA$ in the notation.} is the density of $\PSU(t; \BA)$ conditioned on the initial state $\PSU(0; \BA) = \BU_0$
and 
the expectation $\mathbb{E}_{\rho_0(\BU_0 | \BA)}$ is taken with respect to the initial density $\rho_0(\BU_0 | \BA)$.
Recall that the DSM objective~\eqref{eqn:DSM-continuous} (before averaging with respect to $\BA$ and time) is
\begin{equation}\label{eqn:app-DSM}
    \begin{split}
        &\mathbb{E}_{\rho_0(\BU_0 | \BA)}\mathbb{E}_{\rho_{t|0}(\BU|\BU_0)} \left[\|\SC_{\theta}(t, \PSU(t; \BA), \BA) -  \nabla_{\BU} \log \rho_{t|0} (\PSU(t; \BA) | \PSU(0;\BA)) \|^2\right]\\
        = & \mathbb{E}_{\rho_{t}(\BU|\BA)} \left[\|\SC_{\theta}(t, \PSU(t; \BA), \BA)\|^2\right] 
        + \mathbb{E}_{\rho_0(\BU_0 | \BA)}\mathbb{E}_{\rho_{t|0}(\BU|\BU_0)}  \left[\| \nabla_{\BU} \log \rho_{t|0} (\PSU(t; \BA) | \PSU(0;\BA)) \|^2\right] \\
        & ~~~-2\mathbb{E}_{\rho_0(\BU_0 | \BA)}\mathbb{E}_{\rho_{t|0}(\BU | \BU_0)}\left[\left\langle \Psi_{\theta}(t, \PSU(t; \BA), \BA), \nabla_{\BU} \log \rho_{t|0}(\PSU(t; \BA) | \PSU(0; \BA))\right\rangle  \right] 
    \end{split}
\end{equation}
Subtracting~\eqref{eqn:app-DSM} from~\eqref{eqn:app-SM} leads to 
\[
R(t; \BA) := \mathbb{E}_{\rho_t(\BU | \BA)} 
\left[\|\Psi(t, \PSU(t; \BA), \BA)\|^2 \right] - \mathbb{E}_{\rho_0(\BU_0 | \BA)}\mathbb{E}_{\rho_{t|0}(\BU|\BU_0)}  \left[\| \nabla_{\BU}\log\rho_{t|0} (\PSU(t; \BA) | \PSU(0;\BA)) \|^2\right],
\]
which is independent of the parameter $\theta$. 

It is trivial to verify that $R(0; \BA) = 0$ for any $\BA \in \mathbb{R}^{d_{\mathcal{A}}}$. 
It remains to show that $R(t; \BA)$ is finite for any $t \in (0, T]$ and $\BA \in \mathbb{R}^{d_{\mathcal{A}}}$.
For $t \in (0, T]$, through a random-time-change $t \to \bar{\alpha}(t) := \int_0^t \alpha(s) \, ds$ for the time-homogeneous OU process~\eqref{eqn:OU-process} (when $C$ is identity), we derive the conditional density for the forward SDE~\eqref{eqn:forward-sde-finite},
\[
\rho_{t|0} (\BU | \BU_0) = \frac{1}{\sqrt{(2 \pi (1 - \text{e}^{-\bar{\alpha}(t)}))^{d_{\mathcal{U}}}}}  \text{exp}\left({-\frac{1}{2(1 - \text{e}^{-\bar{\alpha}(t)})} \|\BU - \text{e}^{-\bar{\alpha}(t)/2}\BU_0\|^2} \right).
\]
Consequently, for all $\BA \in \mathbb{R}^{d_{\mathcal{U}}}$ and all $t \in (0, T]$,
\begin{equation}\label{eqn:conditional-score-estimate}
\mathbb{E}_{\rho_0(\BU_0 | \BA)}\mathbb{E}_{\rho_{t|0}(\BU|\BU_0)}  \left[\| \nabla_{\BU}\log\rho_{t|0} (\PSU(t; \BA) | \PSU(0;\BA)) \|^2\right] 
=
d_{\mathcal{U}}\left(1 - \text{e}^{-\bar{\alpha}(t)}\right)^{-1} < \infty\,,
\end{equation}
where the term $d_{\mathcal{U}}$ is the trace of the identity covariance mapping from $\mathbb{R}^{d_{\mathcal{U}}}$ to $\mathbb{R}^{d_{\mathcal{U}}}$. 
Thus, under the assumption of Theorem~\ref{thm:DSM-objective-continuous}, DSM objective~\eqref{eqn:DSM-continuous} and SM objective~\eqref{eqn:SM-objective} have the same minimizer for any $t \geq 0$.  
\begin{remark}
    In the infinite-dimensional setting where $d_\mathcal{U} \to \infty$, \eqref{eqn:conditional-score-estimate} converges to infinity due to the identity diffusion term involved in the forward SDE~\eqref{eqn:forward-sde-finite}. This suggests the necessity of replacing the identity operator with a trace-class operator $\mathcal{C}$ in the diffusion term when $d_\mathcal{U} \to \infty$. 
\end{remark}

%
%

\section{Proof of~\texorpdfstring{\eqref{eqn:OU-score-function}}{(OU-score-function)}}\label{app:OU-score-function}
The proof is the same as that for the unconditional setting presented in~\cite{pidstrigach2023infinite}, which relies on the Gaussianity of the conditional density
\[
\rho_{t|0}(\BU | \BU_0) \propto \text{exp}\left({-\frac{1}{2(1 - \text{e}^{-t})} (\BU - \text{e}^{-t/2}\BU_0)^{\text{T}}  C^{-1} (\BU - \text{e}^{-t/2}\BU_0)} \right), \qquad t > 0\,.
\]
Then the score function can be written as 
\begin{equation*}
\begin{split}
\nabla_{\BU} \log \rho_t(\BU | \BA) &= \nabla_{\BU} \log \int \rho_{t|0}(\BU |\BU_0) \rho_0(\BU_0 | \BA) \, d\BU_0\\
& = -\frac{1}{1 - \text{e}^{-t}} \int C^{-1}(\BU - \text{e}^{-t/2}\BU_0) \frac{\rho_{t|0}(\BU | \BU_0)}{\rho_t(\BU | \BA)} \rho_0(\BU_0|\BA) \, d\BU_0\\
& = -\frac{1}{1 - \text{e}^{-t}} \int C^{-1}(\BU - \text{e}^{-t/2}\BU_0) \rho_{0|t}(\BU_0 | \BU, \BA) \, d\BU_0\\
& = -\frac{1}{1 - \text{e}^{-t}} C^{-1} \mathbb{E}_{\BA} \left[U(t; \BA) - \text{e}^{-t/2}U(0; \BA) | U(t; \BA) = \BU \right],
\end{split}
\end{equation*}
where $\rho_{0|t}(\BU_0 | \BU, \BA)$ is the conditional density of $(U(0; \BA) | U(t; \BA) =\BU, A = \BA)$ and here we have used $A$ to denote the random variable that generates a realization $\BA$.

%
%
\section{Proof of Theorem~\ref{thm:DSM-objective-continuous-infinite}}\label{app:DSM-objective-continuous-infinite}
The proof is similar to the finite-dimensional case. 
First, note that the SM objective in the functional space setting is 
\begin{equation}\label{eqn:app-fSM}
\begin{split}
 &\mathbb{E}_{\mu_t(\cdot | a)} 
\left[\|\Psi_{\theta}(t, \PSU(t; a), a) - \Psi(t, \PSU(t; a), a)\|_{\mathcal{U}}^2 \right]\\
= & \mathbb{E}_{\mu_t(\cdot | a)} \left[\|\Psi_{\theta}(t, \PSU(t; a), a)\|_{\mathcal{U}}^2 \right]
+ \mathbb{E}_{\mu_t(\cdot | a)} \left[\|\Psi(t, \PSU(t; a), a)\|_{\mathcal{U}}^2 \right]\\
\qquad \qquad & -2 \mathbb{E}_{\mu_t(\cdot | a)} \left[ \langle \Psi_{\theta}(t, \PSU(t; a), a), \Psi(t, \PSU(t; a), a) \rangle_{\mathcal{U}}\right]\,.
\end{split}
\end{equation}
The last cross term can be written as 
\begin{equation*}
\begin{split}
(1-\text{e}^{-t})^{-1}\mathbb{E}_{\mu_t(\cdot | a)} \left[ \mathbb{E}_{\mu_{0|t}(\cdot | U(t;a), a)}\left[ \langle \Psi_{\theta}(t, \PSU(t; a), a),  U(t;a) -  \text{e}^{-t/2} U(0; a) \rangle_{\mathcal{U}}   \right] \right]\\
=
(1-\text{e}^{-t})^{-1}\mathbb{E}_{\mu_0(\cdot | a)} \left[ \mathbb{E}_{\mu_{t|0}(\cdot | U(0;a), a)}\left[ \langle \Psi_{\theta}(t, \PSU(t; a), a),  U(t;a) -  \text{e}^{-t/2} U(0; a) \rangle_{\mathcal{U}}   \right] \right]\,
\end{split}
\end{equation*}
Also, note that the DSM objective is
\begin{equation}\label{eqn:app-fDSM}
    \begin{split}
&\mathbb{E}_{\mu_{0, t}(\cdot | a)}
\left[\left\|\SC_{\theta}(t, \PSU(t; a), a) + (1 - \text{e}^{-t})^{-1}(U(t; a) -\text{e}^{-\frac{t}{2}} U(0; a)) \right\|_{\mathcal{U}}^2\right]\\
= &  \mathbb{E}_{\mu_t(\cdot | a)} \left[\|\Psi_{\theta}(t, \PSU(t; a), a)\|_{\mathcal{U}}^2 \right]
+ \mathbb{E}_{\mu_{0, t}(\cdot | a)}
\left[\left\|(1 - \text{e}^{-t})^{-1}(U(t; a) -\text{e}^{-\frac{t}{2}} U(0; a)) \right\|_{\mathcal{U}}^2\right]\\
\qquad \qquad & -2 (1-\text{e}^{-t})^{-1}\mathbb{E}_{\mu_0(\cdot | a)} \left[ \mathbb{E}_{\mu_{t|0}(\cdot | U(0;a), a)}\left[ \langle \Psi_{\theta}(t, \PSU(t; a), a),  U(t;a) -  \text{e}^{-t/2} U(0; a) \rangle_{\mathcal{U}}   \right] \right]\,.
    \end{split}
\end{equation}
Comparing~\eqref{eqn:app-fSM} with~\eqref{eqn:app-fDSM} leads to the difference 
\begin{equation*}
    \begin{split}
R(t; a) &:= \mathbb{E}_{\mu_t(\cdot | a)} 
\left[\|\Psi(t, \PSU(t; a), a)\|_{\mathcal{U}}^2 \right] \\
&\quad - \mathbb{E}_{\mu_0(\cdot | a)}\mathbb{E}_{\mu_{t|0}(\cdot|U(0; a))}  \left[\| (1-\text{e}^{-t})^{-1}(U(t; a) - \text{e}^{-t/2} U(0; a)) \|_{\mathcal{U}}^2\right]\,.
\end{split}
\end{equation*}
Clearly, $R(0; a) = 0$ for any $a$. 
Note that for any $t > 0$, we have
\[
\mathbb{E}_{\mu_0(\cdot | a)} \mathbb{E}_{\mu_{t|0}(\cdot|U(0; a), a)}  \left[\| (1-\text{e}^{-t})^{-1}(U(t; a) - \text{e}^{-t/2} U(0; a)) \|_{\mathcal{U}}^2\right] = \frac{1}{1 - \text{e}^{-t}} \text{Tr}(\mathcal{C}) < \infty\,.
\] 
Hence, the SM objective is equivalent to the DSM objective, provided that 
\[
\sup_{t \in (0, T]}\mathbb{E}_{\nu_{a}}\mathbb{E}_{\mu_{t}(\cdot |a)} \left[\|\SC(t, \PSU(t; a), a)\|_{\mathcal{U}}^2\right] < \infty\,.
\]

To see the corollary~\ref{cor:DSM-objective-continuous-infinite-Dirac}, note that by~\eqref{eqn:noise-free-estimates}, $R(t; \BA) = 0$ for any $t \geq 0$ and $a \in \mathcal{A}$. The equivalence between SM and DSM objectives holds true trivially.


%
%

\section{DSM objective as the NVLB of the NLL}\label{app:NVLB}

Following~\cite{ho2020denoising}, we re-derive the DSM objective~\eqref{eqn:DSM} as the NVLB of the NLL loss defined in~\eqref{eqn:NLL-loss} for operator learning.
Given an input $\BA$, DDPM starts with the forward transition density (as defined in~\eqref{eqn:forward-transition})
\[
    \bar{\rho}(\BU_n | \BU_{n-1}, \BA) = \mathcal{N}(\sqrt{1 - \beta_n} \BU_{n-1}, \beta_n \mathbf{I}), \qquad \bar{\rho}(\BU_0 | \BA) = p(\BU_0 | \BA)
\]
for $n = 1, \ldots, N$
and hence the $n$-step transition density (conditioned on $\PSU_{0}(\BA) = \BU_0$)
\[
\bar{\rho}(\BU_n | \BU_{0}, \BA) = \mathcal{N}(\sqrt{\gamma_n}\BU_0, (1 - \gamma_n)\mathbf{I}),
\]
where $\gamma_n = \prod_{i=1}^n (1 - \beta_n)$.

Generally, the true reverse transition density $\bar{\rho}(\BU_{n-1} | \BU_{n})$ is intractable since the initial density $\rho(\BU_0 | \BA)$ is unknown.
However, the conditional reverse transition density $\bar{\rho}(\BU_{n-1} | \BU_{n},  \BU_{0}, \BA)$ can be derived through the Bayes' formula,
\begin{equation}\label{eqn:true-reverse-density}
\begin{split}
\bar{\rho}(\BU_{n-1} | \BU_{n},  \BU_{0}) 
= 
\bar{\rho}(\BU_{n} | \BU_{n-1})
\frac{\bar{\rho}(\BU_{n-1} | \BU_{0})}{\bar{\rho}(\BU_{n} | \BU_{0})},
\end{split}
\end{equation}
which turns out to be a Gaussian $\mathcal{N}(m(n, \BU_{n}, \BU_{0}), \Sigma(n, \BU_{n}, \BU_{0}))$
with
\begin{equation*}
m(n, \BU_{n}, \BU_{0}) = \frac{\sqrt{1 - \beta_n}(1 - \gamma_{n-1})}{1 - \gamma_n}\BU_{n} + \frac{\sqrt{{\gamma}_{n-1}}\beta_n}{1 - {\gamma}_n}\BU_{0}\,, 
\end{equation*}
and 
\begin{equation}\label{eqn:true-reverse-var}
\Sigma(n, \BU_{n}, \BU_{0}) = \frac{1 - \gamma_{n-1}}{1 - \gamma_n}\beta_n \mathbf{I}.
\end{equation}
Using the re-parameterization for $\PSU_0(\BA)$, i.e., 
\begin{equation*}
\PSU_0(\BA) \overset{\text{d}}{=} \frac{1}{\sqrt{\gamma_n}} \left(\PSU_n(\BA) - \sqrt{1 - \gamma_n} \epsilon_n \right)\,,
\end{equation*}
we can re-parameterize the mean in terms of the Gaussian noise $\epsilon_n$ as
\begin{equation}\label{eqn:true-reverse-mean}
\begin{split}
m(n, \BU_{n}, \epsilon_n) = \frac{1}{\sqrt{1 - \beta_n}} \left(\BU_{n} - \frac{\beta_n}{\sqrt{1 - \gamma_n}} \epsilon_n\right).
\end{split}
\end{equation}

The process $\PSU_{0:N}(\BA)$ can be considered as a procedure of gradually adding noise
to $\PSU_0(\BA)$ until it becomes a Gaussian noise $\PSU_N(\BA)$ for $N$ large.
Conversely, we can start from a normal distribution and gradually denoise it to obtain the desired distribution $\bar{\rho}(\BU_0 | \BA) = p(\BU_0 | \BA)$. 
This is accomplished by defining a reverse time Markov chain $\PSV_{0:N}(\BA)$ with the reverse transition density
\begin{equation}\label{eqn:approx-reverse-density}
    \bar{\rho}_{\theta}(\BV_{n-1} | \BV_n, \BA) := \mathcal{N}(m_{\theta}(n, \BV_n, \BA), \Sigma_{\theta}(n, \BV_n, \BA))
\end{equation}
and the terminal distribution
\[
\PSV_N(\BA) \sim  \mathcal{N}(0, \mathbf{I}),
\]
where $m_{\theta}(n, \BV_n, \BA)$ and $\Sigma_{\theta}(n, \BV_n, \BA)$ are the mean and the covariance, respectively. 
They are chosen to match the true mean~\eqref{eqn:true-reverse-mean} and the true covariance~\eqref{eqn:true-reverse-var}, which leads to the same parameterization as~\eqref{eqn:reverse-transition}, i.e.,
\[
m_{\theta}(n, \BV_n, \BA)
=
\frac{1}{\sqrt{1-\beta_n}} \left(\BV_{n} - \frac{\beta_n}{\sqrt{1 - \gamma_n}} \epsilon_{\theta}(n, \BV_{n}, \BA) \right)\,,
\]
where $\epsilon_{\theta}$ is a neural network that approximates Gaussian noise $\epsilon_n$.

Recall that $\bar{\rho}_{\theta}$ denotes the joint density induced by the parameterized Markov process $\PSV_{0:N}(\BA)$ and $\bar{\rho}$ denotes the joint density induced by the Markov process $\PSU_{0:N}(\BA)$.  
For notation simplicity, we omit the dependence on $\BA$ for the processes $\PSU_{0:N}(\BA)$ and $\PSV_{0:N}(\BA)$ in the following derivation.
To derive the DDPM loss function, we first rewrite the averaged (with respect to the true data distribution) log-likelihood

\begin{equation*}
\begin{split}
    &\mathbb{E}_{\bar{\rho}(\BU_0 | \BA)}[\log \bar{\rho}_{\theta}(\PSU_0 | \BA)]\\
    &=\mathbb{E}_{\bar{\rho}(\BU_0 | \BA)}\left[\log \int_{\BU_N} \ldots \int_{\BU_1} \bar{\rho}_{\theta}(\PSU_0, \BU_{1:N} | \BA) \, d\BU_{1:N}\right]\\  
    &=\mathbb{E}_{\bar{\rho}(\BU_0 | \BA)}\left[\log \int_{\BU_N} \ldots \int_{\BU_1} \frac{\bar{\rho}_{\theta}(\PSU_0, \BU_{1:N} | \BA)}{\bar{\rho}(\BU_{1:N}|\BU_0)} \bar{\rho}(\BU_{1:N}|\BU_0) \, d\BU_{1:N}\right]\\  
    &=\mathbb{E}_{\bar{\rho}(\BU_0|\BA)}\left[\log \mathbb{E}_{\bar{\rho}(\BU_{1:N}|\BU_0)}
    \left[
    \frac{\bar{\rho}_{\theta}(\PSU_{0:N} | \BA)}{\bar{\rho}(\PSU_{1:N} | \PSU_0)}
    \right]
    \right]\,.
\end{split}
\end{equation*}
Applying Jensen's inequality to the inner expectation leads to the following conditional NVLB

\begin{equation*}
    \begin{split}
        \mathbb{E}_{\bar{\rho}(\BU_0 | \BA)}[-\log \bar{\rho}_{\theta}(\PSU_0 | \BA)] \leq \mathrm{NVLB} := \mathbb{E}_{\bar{\rho}(\BU_{0:N}|\BA)}\left[\log \frac{\bar{\rho}(\PSU_{1:N}|\PSU_0)}{\bar{\rho}_{\theta}( \PSU_{0:N}|\BA)}\right]\,.
    \end{split}
\end{equation*}
Thanks to the Markov property and the Bayes' formula~\eqref{eqn:true-reverse-density},
\begin{equation*}
\begin{split}
&\text{NVLB} = \mathbb{E}_{\bar{\rho}(\BU_{0:N} | \BA)}\left[ -\log \bar{\rho}_{\theta}(\PSU_N) +\sum_{n=2}^N \log \frac{\bar{\rho}(\PSU_n |\PSU_{n-1})}{\bar{\rho}_{\theta}(\PSU_{n-1} | \PSU_{n}, \BA)} + \log \frac{\bar{\rho}(\PSU_1 |\PSU_0)}{\bar{\rho}_{\theta}(\PSU_0|\PSU_1, \BA)} \right]\\
=& \mathbb{E}_{\rho(\BU_{0:N} | \BA)}\left[\log\frac{\bar{\rho}(\PSU_N |\PSU_0 )}{\bar{\rho}_{\theta}(\PSU_N)}\right] + \sum_{n=2}^N\mathbb{E}_{\bar{\rho}(\BU_{0:N} | \BA)}\left[ \log \frac{\bar{\rho}(\PSU_{n-1} | \PSU_n, \PSU_0)}{\bar{\rho}_{\theta}(\PSU_{n-1} | \PSU_n, \BA)}\right] \\
&~~~~~~~+ \mathbb{E}_{\bar{\rho}(\BU_{0:N} | \BA)}\left[-\log \bar{\rho}_{\theta}(\PSU_0 | \PSU_1, \BA )  \right]\\
=& L_N + \sum_{n=2}^N L_{n-1} + L_0,
\end{split}
\end{equation*}
where
\begin{equation*}
    \begin{split}
        L_0(\theta; \BA) &= \mathbb{E}_{\bar{\rho}(\BU_0, \BU_1 | \BA)}\left[-\log \bar{\rho}_{\theta}(\PSU_0 | \PSU_1, \BA) \right]   \\
    L_{n-1}(\theta; \BA) &= \mathbb{E}_{\bar{\rho}(\BU_0, \BU_n | \BA)}\left[\text{KL}(\bar{\rho}(\PSU_{n-1} | \PSU_n, \PSU_0) || \bar{\rho}_{\theta}(\PSU_{n-1} | \PSU_n, \BA))\right], \qquad n = 2, \ldots, N   \\
        L_N(\theta; \BA) &= \mathbb{E}_{\bar{\rho}(\BU_0 | \BA)}\left[\text{KL}(\bar{\rho}(\PSU_N | \PSU_0) || \bar{\rho}_{\theta}(\PSU_N)) \right] 
    \end{split}
\end{equation*}

Comparing the two densities~\eqref{eqn:true-reverse-density} and~\eqref{eqn:approx-reverse-density} in the KL divergence and empirically ignoring the prefactor leads to 
\begin{equation}\label{eqn:L_t-1}
L_{n-1}(\theta; \BA) 
= \mathbb{E}_{\PSU_0 \sim p(\cdot | \BA)}\mathbb{E}_{\epsilon_n \sim \mathcal{N}(0, \mathbf{I})} \left[\|\epsilon_n - \epsilon_{\theta}(n, \sqrt{\gamma_n}\PSU_0 + \sqrt{1 - \gamma_n} \epsilon_n, \BA)\|^2\right]
\end{equation}
for $n = 2, \ldots, N$.
Similarly, we can calculate $L_0$ and ignore the prefactor, which gives 
\begin{equation}\label{eqn:L_0}
L_0(\theta; \BA) = \mathbb{E}_{\PSU_0 \sim p(\cdot | \BA)}\mathbb{E}_{\epsilon_1 \sim \mathcal{N}(0, \mathbf{I})}\left[\|\epsilon_1 - \epsilon_{\theta}(1, \sqrt{\gamma_1}\PSU_0 + \sqrt{1 - \gamma_1} \epsilon_1, \BA)\|^2\right].
\end{equation}
Unifying~\eqref{eqn:L_t-1} and~\eqref{eqn:L_0} and averaging over the input $\BA$ leads to the final loss function
\begin{equation}\label{eqn:unified_L}
    L(\theta)
= 
\frac{1}{N}\sum_{n=1}^{N}
\mathbb{E}_{\BA \sim \nu({\BA})}
\mathbb{E}_{\PSU_0 \sim p(\cdot | \BA)}\mathbb{E}_{\epsilon_n \sim \mathcal{N}(0, \mathbf{I})}\left[\|\epsilon_n - \epsilon_{\theta}\left(n, \sqrt{\gamma_n}\PSU_0 + \sqrt{1 - \gamma_n} \epsilon_n, \BA\right)\|^2\right]\,,
\end{equation}
which is consistent with the DDSM objective~\eqref{eqn:DSM}.

%
%

\section{The choice of the covariance for the reverse process}\label{app:cov-choice}
For the simplicity of notation, we prove Theorem~\ref{thm:optimal-sigma} for the unconditional setting. Indeed, we prove a stronger version of Theorem~\ref{thm:optimal-sigma} in Proposition~\ref{prop:Dirac} and Proposition~\ref{prop:Gaussian}.
We first review basic concepts in information theory.
Let $X$ and $Y$ be two random variables supported by $\mathcal{X}$ and $\mathcal{Y}$, respectively. We denote 
\[H(X) := -\int_\mathcal{X} p_X(x)\log p_X(x)\,dx\] 
the entropy of $X\sim p_X(x)$. By convention, we define the entropy of a Dirac distribution to be $-\infty$.
We recall that the conditional entropy of $Y$ given $X$ is defined as
\[
H(Y|X) := -\int_{x \in \mathcal{X}} \int_{y \in \mathcal{Y}} p_{(X, Y)}(x, y) \log p_{Y|X=x}(y|x) \, dx dy\,,
\]
where $p_{(X, Y)}$ is the joint density of $(X, Y)$ and $p_{Y|X=x}$ is the density of $Y$ conditioned on $X = x$.
Note that $H(X, Y) = H(X|Y) + H(Y)$.
Furthermore, we recall the definition of mutual information between $X$ and $Y$
\[
I(X; Y) = I(Y; X) := H_p(X) + H_p(Y) - H_p(X,Y)\, 
\]
where $H(X,Y)$ is the entropy of the joint distribution $p_{(X,Y)}(x,y)$. 

The following proposition gives the conditional entropy of the reverse transition $H(\PSU_{n-1}|\PSU_n)$ in the case of noise-free data.
\begin{prop}\label{prop:Dirac}
Let $\PSU_{0:N}$ be a Markov chain with a transition probability defined by~\eqref{eqn:forward-transition}. Suppose that the initial state $\PSU_0$ follows a Dirac distribution at a point $\BU_0$, i.e., $\PSU_0 \sim \delta_{\BU_0}(\cdot)$, then
\[
H(\PSU_{n-1}|\PSU_n) = H(\PSU_n|\PSU_{n-1}) + H(\PSU_{n-1} | \PSU_0) - H(\PSU_n | \PSU_0) \,,\; \qquad n = 1, \ldots, N\,.
\]
\end{prop}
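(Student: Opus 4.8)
The plan is to reduce the claimed identity to two elementary facts: the symmetry of mutual information, and the collapse of conditioning on a deterministic variable. First I would rearrange the target equation into the equivalent form
\[
H_p(u_{t-1}|u_t) - H_p(u_t|u_{t-1}) = H_p(u_{t-1}|u_0) - H_p(u_t|u_0),
\]
so that the $u_0$-conditioned terms are isolated on the right. The left-hand side is handled by the chain rule for differential entropy alone: writing $H_p(u_{t-1}|u_t) = H_p(u_{t-1}, u_t) - H_p(u_t)$ and $H_p(u_t|u_{t-1}) = H_p(u_{t-1}, u_t) - H_p(u_{t-1})$ and subtracting, the joint entropy $H_p(u_{t-1}, u_t)$ cancels, leaving $H_p(u_{t-1}) - H_p(u_t)$. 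This is just the symmetry $I(u_{t-1}; u_t) = I(u_t; u_{t-1})$ and uses no structure of the chain beyond the existence of the joint density.

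The substance of the proposition lies in matching the right-hand side, where the hypothesis $u_0 \sim \delta_{u_0}$ enters. Because the initial state is a point mass, the marginal law of any later state coincides with its law conditioned on $u_0$: concretely $p(u_t) = \int p(u_t|u_0')\,\delta_{u_0}(u_0')\,du_0' = p(u_t|u_0)$, and likewise for $u_{t-1}$, where the conditional densities are the explicit Gaussians $p(u_t|u_0) = \mathcal{N}(u_t; \sqrt{\bar\alpha_t}u_0, (1-\bar\alpha_t)\mathbf{I})$ established in~\eqref{eqn:ut_given_u0}. Substituting into the definition of conditional entropy gives $H_p(u_t|u_0) = H_p(u_t)$ and $H_p(u_{t-1}|u_0) = H_p(u_{t-1})$, so the right-hand side is also $H_p(u_{t-1}) - H_p(u_t)$ and the two sides agree. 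Rearranging back recovers the stated three-term form, whose practical payoff is that all three terms on the right are differential entropies of known Gaussians (the forward transition~\eqref{eqn:forward-transition} together with~\eqref{eqn:ut_given_u0}) and are therefore available in closed form, whereas $H_p(u_{t-1}|u_t)$ on its own is not.

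The main obstacle I anticipate is a matter of rigor rather than of ideas: one must handle the differential entropies carefully in the presence of the degenerate initial law. For $t \ge 2$ every density appearing above is a nondegenerate Gaussian, so all entropies are finite and the chain-rule manipulations are unambiguous; I would state the argument in full for this range. For $t = 1$ the term $H_p(u_0)$ is $-\infty$, and the identity must be read in a limiting/degenerate sense (both $H_p(u_0|u_1)$ and the residual collapse to $-\infty$). Since the loss~\eqref{eqn:L_t-1} only invokes the $t \ge 2$ contributions, this boundary case is immaterial downstream, and I would simply remark on it separately rather than treat it as part of the main chain of equalities.
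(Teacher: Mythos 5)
Your proof is correct, but it takes a genuinely different route from the paper's. The paper applies the data processing inequality to the chain $u_0 \to u_{t-1} \to u_t$, obtaining $I(u_0; u_{t-1}) \geq I(u_0; u_t)$ and hence the one-sided bound $H_p(u_{t-1}|u_t) \geq H_p(u_t|u_{t-1}) + H_p(u_{t-1}|u_0) - H_p(u_t|u_0)$ for an \emph{arbitrary} initial law, and then upgrades it to an equality by noting that the Dirac hypothesis makes $u_0 \to u_t \to u_{t-1}$ Markov as well (i.e., $p(u_{t-1}|u_t,u_0) = p(u_{t-1}|u_t)$), which is precisely the equality case of the data processing inequality. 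You instead prove the identity directly: the chain rule collapses $H_p(u_{t-1}|u_t) - H_p(u_t|u_{t-1})$ to $H_p(u_{t-1}) - H_p(u_t)$, and the Dirac hypothesis makes conditioning on $u_0$ vacuous, so $H_p(u_\cdot|u_0) = H_p(u_\cdot)$ and both sides match. The two arguments rest on the same underlying fact (conditioning on a constant carries no information, equivalently $I(u_0; u_t) = 0$), but each buys something the other does not: the paper's detour through the data processing inequality exhibits the general inequality valid for any initial distribution and identifies the Dirac case as its extremal case, which is the conceptual point feeding into the covariance choice~\eqref{eqn:cov}; your direct computation is shorter, avoids invoking the equality characterization of the data processing inequality altogether, and — unlike the paper — explicitly flags that for $t=1$ the differential entropies $H_p(u_0|u_1)$ and $H_p(u_0|u_0)$ degenerate to $-\infty$ under the Dirac initial law, so the identity at $t=1$ must be read in a limiting sense, a point the paper's statement ($t = 1,\ldots,T$) silently glosses over and which is harmless since the loss~\eqref{eqn:L_t-1} only uses $t \geq 2$.
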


\begin{proof}
For $n = 1, \ldots, N$, applying the data processing inequality to the Markov chain
$\PSU_0\to \PSU_{n-1} \to \PSU_n$ we have
\begin{equation}\label{eqn:data-processing}
I(\PSU_0; \PSU_{n-1}) \geq I(\PSU_0; \PSU_{n})\,.
\end{equation}
Furthermore, from the definition of the mutual information, we have
\[
I(\PSU_0; \PSU_{n-1}) = H(\PSU_{n-1}) - H(\PSU_{n-1} | \PSU_0),
\]
\[
I(\PSU_0; \PSU_{n}) = H(\PSU_{n}) - H(\PSU_{n} | \PSU_0).
\]
Hence, we obtain in terms of conditional entropies
\[
H(\PSU_{n}) - H(\PSU_{n-1}) \leq H(\PSU_{n} | \PSU_0)  -  H(\PSU_{n-1} | \PSU_0).
\]
Owing to the fact that (from the symmetry $I(\PSU_{n-1} ; \PSU_n) = I(\PSU_n ; \PSU_{n-1})$) 
\[
H(\PSU_n) - H(\PSU_{n-1})  = H(\PSU_n|\PSU_{n-1}) - H(\PSU_{n-1}| \PSU_n),
\]
we immediately obtain the inequality
\[
H(\PSU_{n-1}|\PSU_n) \geq H(\PSU_n|\PSU_{n-1}) + H(\PSU_{n-1} | \PSU_0) - H(\PSU_n | \PSU_0)
\]
Note that the inequality~\eqref{eqn:data-processing} is attained if and only if $\PSU_0 \to \PSU_n \to \PSU_{n-1}$ also forms a Markov chain, i.e.,
\begin{equation}\label{eqn:Markov-property}
\bar{\rho}(\BU_{n-1} | \BU_n, \BU_0) = \bar{\rho}(\BU_{n-1} | \BU_n),
\end{equation}
which holds true when $\PSU_0\sim\delta_{\BU_0}(\cdot)$, where $\delta_{\BU_0}(\cdot)$ is the Dirac measure supported at $\BU_0$. 

\end{proof}

\begin{remark}
An important implication of the above proposition is the parameterization of the reverse process covariance~\eqref{eqn:cov}. 
When $\PSU_0\sim\delta_{\BU_0}(\cdot)$, the Markov property~\eqref{eqn:Markov-property} holds and hence $\bar{\rho}(\BU_{n-1} | \BU_n, \BU_0)$ is independent of $\BU_0$. 
With the parameterization~\eqref{eqn:cov} for $\Sigma_{\theta}(n, u_n, a)$ in the noise-free setting, the approximated reverse Markov chain that DDPM learned becomes exact, i.e.,
$\bar{\rho}_{\theta^*}(\BU_{n-1} | \BU_n) = \bar{\rho}(\BU_{n-1} | \BU_n)$ (see~\eqref{eqn:true-reverse-density} and~\eqref{eqn:approx-reverse-density}). Assuming $\PSV_N$ and $\PSU_N$ have the same distribution $\mathcal{N}(0, \mathbf{I})$, the two Markov chains $\PSV_{0:N}$ and $\PSU_{0:N}$ are indistinguishable and hence
their conditional entropies coincide,
\[H(\PSU_{n-1}|\PSU_n) = H_{\theta^*}(\PSV_{n-1}|\PSV_n)\,,\;\qquad n = 1, \dots, N,\]
where we have used $H_{\theta^*}$ to emphasize the fact that the conditional entropy depends on the parameter.  
\end{remark}

A similar result holds in the case when output data are perturbed by an additive
standard Gaussian noise. 
\begin{prop}\label{prop:Gaussian}
Let $\PSU_{0:N}$ be a Markov chain with transition probability defined by~\eqref{eqn:forward-transition}. Suppose that the initial state $\PSU_0$ follows a (nonzero mean) Gaussian distribution with the identity covariance, i.e., $\PSU_0 \sim \mathcal{N}(m, \mathbf{I})$, then
\[
H(\PSU_{n-1}|\PSU_n) = H(\PSU_n|\PSU_{n-1}) \,,\; \qquad n = 1, \ldots, N\,.
\]
\end{prop}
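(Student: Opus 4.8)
The plan is to reduce the claimed equality of conditional entropies to an equality of the marginal entropies $H_p(u_t)$ and $H_p(u_{t-1})$, exploiting the fact that an identity-covariance Gaussian initial law is preserved under the forward diffusion in a particularly convenient way. The starting point is the symmetry of mutual information, $I(u_{t-1}; u_t) = I(u_t; u_{t-1})$, which upon expanding each side as a difference of an entropy and a conditional entropy yields
\[
H_p(u_t|u_{t-1}) - H_p(u_{t-1}|u_t) = H_p(u_t) - H_p(u_{t-1}).
\]
This is exactly the identity already used in the noise-free proposition, so it suffices to prove that the two marginal entropies on the right coincide.

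The key step is to compute the marginal law of $u_t$ explicitly. Using \eqref{eqn:ut_given_u0}, namely $p(u_t|u_0) = \mathcal{N}(u_t; \sqrt{\bar{\alpha}_t} u_0, (1-\bar{\alpha}_t)\mathbf{I})$, together with the hypothesis $u_0 \sim \mathcal{N}(m, \mathbf{I})$, I would marginalize over $u_0$ by the law of total covariance. The mean of $u_t$ becomes $\sqrt{\bar{\alpha}_t}\, m$, while its covariance is
\[
\bar{\alpha}_t\, \mathbf{I} + (1 - \bar{\alpha}_t)\, \mathbf{I} = \mathbf{I},
\]
so that $u_t \sim \mathcal{N}(\sqrt{\bar{\alpha}_t}\, m, \mathbf{I})$ for every $t$. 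The crucial point is that the variance inherited from diffusing the initial randomness (the factor $\bar{\alpha}_t$) and the variance injected by the forward noise (the factor $1 - \bar{\alpha}_t$) sum exactly to the identity; this is special to the identity-covariance initial condition and is precisely what drives the result.

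To finish, I would invoke that the differential entropy of a Gaussian depends only on its covariance and not on its mean. Since every $u_t$ has covariance $\mathbf{I}$, the marginal entropy $H_p(u_t)$ is independent of $t$, so $H_p(u_t) = H_p(u_{t-1})$ for all $t$. Substituting into the displayed symmetry identity gives $H_p(u_t|u_{t-1}) = H_p(u_{t-1}|u_t)$, which is the assertion.

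I do not expect a serious obstacle: the argument is structurally simpler than the noise-free case because the data processing inequality and the attainment condition \eqref{eqn:Markov-property} play no role here. The only step demanding care is the marginalization, where one must apply the law of total covariance correctly — treating $u_0$ as random with covariance $\mathbf{I}$ rather than as a fixed point — to see that the two variance contributions combine exactly to $\mathbf{I}$.
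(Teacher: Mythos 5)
Your proof is correct and follows essentially the same route as the paper's: both reduce the claim, via the symmetry $I(u_{t-1};u_t)=I(u_t;u_{t-1})$, to the equality of marginal entropies $H_p(u_t)=H_p(u_{t-1})$, which holds because the forward chain preserves the identity covariance of the initial Gaussian law. If anything, your explicit marginalization showing $u_t \sim \mathcal{N}(\sqrt{\bar{\alpha}_t}\, m, \mathbf{I})$ for every $t$ is slightly tighter than the paper's argument, which verifies only the covariance recursion and leaves implicit (behind an appeal to the maximum-entropy property) the Gaussianity of the marginals needed to pass from equal covariances to equal entropies.
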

\begin{proof}
The identity follows from the well-known property that
the Gaussian distribution $\mathcal{N}(m, \mathbf{I})$ has the maximum entropy among all distributions whose density is strictly positive on 
$\R^n$ and is subject to the identity covariance constraint.
The forward transition~\eqref{eqn:forward-transition} preserves the identity covariance of the forward DDPM since 
\[
\text{Cov}(\PSU_n) = \beta_n \text{Cov}(\PSU_{n-1}) + (1 - \beta_n)\text{Cov}(\epsilon_n) = \mathbf{I}.
\]
Therefore, $H(\PSU_n) = H(\PSU_{n-1})$ for all $n = 1, \cdots, N$, which immediately implies the desired equality since
\[
H(\PSU_{n-1}) + H(\PSU_n | \PSU_{n-1})  =  H(\PSU_n) + H(\PSU_{n-1} | \PSU_n) .
\]
\end{proof}
The result suggests that the DDPM learns a transformation between $\mathcal{N}(m, \mathbf{I})$ and $\mathcal{N}(0, \mathbf{I})$, which corresponds to the optimal transport map between the two Gaussians~\cite{khrulkov2022understanding}. 
Furthermore, after integrating out $\PSU_0$ in~\eqref{eqn:true-reverse-density}, the true reverse distribution $\rho(\PSU_{n-1} | \PSU_n)$ has a covariance $\beta_n \mathbf{I}$. Hence, the parameterization~\eqref{eqn:cov} for $\Sigma_{\theta}$ in the Gaussian noise setting also leads to \[H(\PSU_{n-1}|\PSU_n) = H_{\theta^*}(\PSV_{n-1}|\PSV_n)\,,\;\qquad n = 1, \dots, N.\]

In both cases, the noise-free and Gaussian noise, the learned parametric Markov joint $\bar{\rho}_{\theta^*}$ achieves the exact likelihood under the true Markov joint $\bar{\rho}$, i.e., 
\[
\begin{split}
\mathbb{E}_{\bar{\rho}_{\theta^*}}\left[-\log \bar{\rho}_{\theta^*}(\PSV_{0:N}) \right]
&=
H_{\theta^*}(\PSV_N) + \sum_{n=1}^N H_{\theta^*}(\PSV_{n-1} | \PSV_n) \\
&=
H(\PSU_N) + \sum_{n=1}^N H(\PSU_{n-1} | \PSU_n) 
=
\mathbb{E}_{\bar{\rho}}\left[-\log \bar{\rho}(\PSU_{0:N}) \right]\,.
\end{split}
\]

%
%

\section{Gaussian random fields and Wiener processes on Hilbert spaces}\label{app:Wiener-process}
We review basic elements of probability theory in infinite-dimensional Hilbert spaces~\cite{bogachev1998gaussian, da2014stochastic}. Let $(\Omega, \mathcal{F}, \mathbb{P})$ be a probability space and $(\mathcal{U}, \langle \cdot, \cdot \rangle_{\mathcal{U}})$ be an infinite-dimensional separable Hilbert space with the inner product $\langle \cdot, \cdot \rangle_{\mathcal{U}}$. 
We call a bounded linear operator $\mathcal{C}: \mathcal{U} \to \mathcal{U}$ a covariance operator if it is self-adjoint, positive-definite and trace class. Then the spectral theorem for self-adjoint operators states that $\mathcal{C}$ admits a countable set of orthonormal eigenfunctions $\{e_j\}_{j \in \mathbb{N}}$ with corresponding positive eigenvalues $\{\lambda_j\}_{j \in \mathbb{N}}$ satisfying $\sum_j \lambda_j < \infty$. Moreover, $\{e_j\}_{j \in \mathbb{N}}$ form a complete orthonormal basis for $\mathcal{U}.$  

A random variable $U: \Omega \to \mathcal{U}$ is said to be a Gaussian random field if for every $u \in \mathcal{U}$, the $\mathbb{R}$-valued random variable $\langle U, u\rangle_{\mathcal{U}}$ is Gaussian (possibly degenerate).
We say $U$ is centered and of covariance operator $\mathcal{C}$ if 
\[
\begin{split}
&\mathbb{E}[\langle U, u \rangle_{\mathcal{U}}] = 0, \quad  u \in \mathcal{U}\,,\\
&\mathbb{E}[\langle U, u_1 \rangle_{\mathcal{U}} \langle U, u_2 \rangle_{\mathcal{U}}] = \langle \mathcal{C}u_1, u_2 \rangle_{\mathcal{U}}, \quad  u_1, u_2 \in \mathcal{U}\,.
\end{split}
\]
We denote by $\mathcal{N}(0, \mathcal{C})$ the probability measure induced by the Gaussian random field $U$ and write $U \sim \mathcal{N}(0, \mathcal{C})$.
For any Gaussian random field $U \sim \mathcal{N}(0, \mathcal{C})$, we can use the basis of eigenfunctions $\{e_j\}_{j \in \mathbb{N}}$ to write
\[
U = \sum_j \sqrt{\lambda_j} \zeta_j e_j,  
\]
where $\zeta_j$ are independent $\mathbb{R}$-valued Gaussian random variables. 
The above expansion provides the theoretical foundation for simulating a Gaussian random field. 
Moreover, if the covariance operator $\mathcal{C}$ is nondegenerate, we can define $\mathcal{C}^{\frac{1}{2}}$ by
\[
\mathcal{C}^{\frac{1}{2}} u := \sum_j \sqrt{\lambda_j} \langle u, e_j \rangle_{\mathcal{U}} e_j, \quad u \in \mathcal{U}.
\]
The operator $\mathcal{C}^{\frac{1}{2}}$ is obviously self-adjoint and of trace class (compact indeed). Moreover, it induces the Cameron-Martin space $\mathcal{M} := \mathcal{C}^{\frac{1}{2}} \mathcal{U}$ with the inner product 
\[
\langle u_1, u_2 \rangle_{\mathcal{M}} := \langle \mathcal{C}^{-\frac{1}{2}} u_1, \mathcal{C}^{-\frac{1}{2}} u_2 \rangle_{\mathcal{U}}\,, \quad u_1, u_2 \in \mathcal{M}.
\]
Since $\mathcal{C}^{-1}$ is unbounded, $\mathcal{M}$ is compactly embedded in $\mathcal{U}$. 
It should be emphasized that, when restricted to the Cameron-Martin space $(\mathcal{M}, \langle \cdot, \cdot \rangle_{\mathcal{M}})$, the Gaussian random field $U$ has an identity covariance operator $\mathcal{I}: \mathcal{M} \to \mathcal{M}$ since, for all $u_1, u_2 \in \mathcal{M}$,
\[
\mathbb{E}[\langle U, u_1 \rangle_{\mathcal{M}} \langle U, u_2 \rangle_{\mathcal{M}}]
=
\mathbb{E}[\langle U, \mathcal{C}^{-1} u_1 \rangle_{\mathcal{U}} \langle U, \mathcal{C}^{-1} u_2 \rangle_{\mathcal{U}}]
=
\langle u_1, \mathcal{C}^{-1} u_2 \rangle_{\mathcal{U}} 
=
\langle u_1, u_2 \rangle_{\mathcal{M}} .
\]

Finally, we define the Wiener process in Hilbert spaces. 
Let $\{\mathcal{F}_t\}_{t > 0}$ be a filtration.  
We call a $\mathcal{U}$-valued continuous, $\mathcal{F}_t$ adapted process $\{W^{\mathcal{U}}(t)\}_{t > 0}$ a $\mathcal{C}$-Wiener process if 
\begin{enumerate}
    \item $W^{\mathcal{U}}(0) =  0$,
    \item $W^{\mathcal{U}}(t) - W^{\mathcal{U}}(s)$ and $\mathcal{F}_s$ are independent for all $t \geq s$,
    \item $W^{\mathcal{U}}(t) - W^{\mathcal{U}}(s) \sim \mathcal{N}(0, (t-s) \mathcal{C})$ for all $t \geq s$.
\end{enumerate}
It can be shown that such a process always exists for a given Hilbert space $\mathcal{U}$. 

%
%

\section{More details on computational benchmarks}\label{sec:detailed-experiments}
We provide further details about data generation, training, and testing of PDNO for the test problems. Before we present a detailed setup for each test problem, we remind the reader of the following general setup for all test problems presented.
\begin{enumerate}[leftmargin=*]
    \item The data set is obtained by sampling inputs, i.e., the fields $a \in \mathcal{A}$ and then solving the corresponding equation $\EQOP(u;a)=0$ to obtain the output $u\in\SU$. Numerically, we only have access to their finite-dimensional counterparts $\BA$ and $\BU$.
    \item The sampling distribution $a \sim \nu_a$ generally depends on a specific model. In the tests reported here and
in \cite{FNO}, the input fields $a$ are sampled from random fields that are obtained by transformations of Gaussian fields of mean zero, with a given covariance operator described for each test separately.
    \item The training data set $\mathcal{D}=\{(\BA^{(i)},\BU^{\eta, (i)})\}_{i=1}^{M_{\text{train}}}$ is built from $M_{\text{train}}$ samples. Similarly, a test data set $\widetilde{D}$ is generated with $M_{\text{test}}$ testing pairs.
    \item Sequential sampling of the reverse Markov chain is represented by a UNet backbone whose parameters are shared across all time steps $n = 1, \ldots, N$. The sinusoidal position encoding is used to specify the diffusion time $n$.
    \item We use a multi-head self-attention block that connects the up-scaling and down-scaling blocks of the UNet. However, we found that the impact of the self-attention block is inconsequential to the final computational accuracy.
    \item Compared to unconditional DDPM, the proposed PDNO takes an additional argument $\BA$ for the noise predictor $\epsilon_{\theta}(n, \BU_n, \BA)$. Since $\BA$ and $\BU_n$ are generally of the same dimension, we simply apply channel-wise concatenation for $\BA$ and $\BU_n$ before feeding them into the UNet. However, we emphasize that the architecture can be easily modified to handle $\BA$ and $\BU_n$ that are of different spatial dimensions by projecting them into a common latent space. 
\end{enumerate}

\begin{table}[!h]
\caption{Training hyper-parameters for Test 1:
Elliptic equation in 1D}\label{tab:params-test1}
\begin{center}
\begin{tabular}{p{0.4\textwidth} p{0.5\textwidth}} 
 \hline
{\bf Architecture} & \\
 \hline
 Initial base channels   & $32$\\
 Up/down sampling multiplier    &   $(1, 2, 4, 8)$\\
 Residual blocks per resolution  &  $2$\\
 \hline
{\bf Training} & \\
  \hline
  Timesteps $N$    &   $100$\\
  Epochs    &   $1000$  \\
  Batch size &  $50$    \\
  Training data size $M_{\text{train}}$  & $10000$\\
  Noise schedule   &   cosine\\
  Optimizer   &    Adam \\
  Learning rate  &  $1$e$-4$ initially; halved by half per $100$ epochs\\
    Data normalized  &    No\\
  \hline
{\bf Testing} &  \\
\hline
  Testing data size $M_{\text{test}}$  &  $10000$\\
  \# of inference samples $M_{\text{infer}}$   &  $1$ if noise-free; $500$ if noisy\\
\end{tabular}
\end{center}
\end{table}

\begin{table}[!h]
\caption{Training hyper-parameters for Test 2:
Elliptic equation in 2D}\label{tab:params-test2}
\begin{center}
\begin{tabular}{p{0.4\textwidth} p{0.5\textwidth}} 
 \hline
{\bf Architecture} & \\
 \hline
 Initial base channels   & $64$\\
 Up/down sampling multiplier    &   $(1, 2, 4, 8)$\\
 Residual blocks per resolution  &  $2$\\
 \hline
{\bf Training} & \\
  \hline
  Timesteps $N$    &   $100$\\
  Epochs    &   $1000$  \\
  Batch size &  $20$    \\
  Training data size $M_{\text{train}}$  & $1000$\\
  Noise schedule   &   cosine\\
  Optimizer   &    Adam \\
  Learning rate  &  $1$e$-4$ initially; halved by half per $100$ epochs\\
    Data normalized  &    Yes\\
  \hline
{\bf Testing} &  \\
\hline
    Testing data size $M_{\text{test}}$  &  $100$\\
  \# of inference samples $M_{\text{infer}}$   &  $1$ if noise-free; $500$ if noisy\\
 \hline
\end{tabular}
\end{center}
\end{table}



\begin{table}[!h]
\caption{Training hyper-parameters for Test 3:
Burgers' equation}\label{tab:params-test3}
\begin{center}
\begin{tabular}{p{0.4\textwidth} p{0.5\textwidth}} 
 \hline
{\bf Architecture} & \\
 \hline
 Initial base channels   & $32$\\
 Up/down sampling multiplier    &   $(1, 2, 4, 8)$\\
 Residual blocks per resolution  &  $2$\\
 \hline
{\bf Training} & \\
  \hline
  Timesteps $N$    &   $100$\\
  Epochs    &   $1000$  \\
  Batch size &  $32$    \\
  Training data size $M_{\text{train}}$  & $1024$\\
  Noise schedule   &   linear\\
  Optimizer   &    Adam \\
  Learning rate  &  $1$e$-4$ initially; halved by half per $100$ epochs\\
    Data normalized  &    No\\
  \hline
{\bf Testing} &  \\
\hline
Testing data size $M_{\text{test}}$  &  $1024$\\
  \# of inference samples $M_{\text{infer}}$   &  $1$ if noise-free\\
 \hline
\end{tabular}
\end{center}
\end{table}

\begin{table}[!h]
\caption{Training hyper-parameters for Test 4:
Advection equation}\label{tab:params-test4}
\begin{center}
\begin{tabular}{p{0.4\textwidth} p{0.5\textwidth}} 
 \hline
{\bf Architecture} & \\
 \hline
 Initial base channels   & $32$\\
 Up/down sampling multiplier    &   $(1, 2, 4, 8)$\\
 Residual blocks per resolution  &  $2$\\
 \hline
{\bf Training} & \\
  \hline
  Timesteps $N$    &   $100$\\
  Epochs    &   $1000$  \\
  Batch size &  $20$    \\
  \# of training points $M_{\text{train}}$ & $1500$ for noise free case; $10^4$ for noisy case\\
  Noise schedule   &   cosine\\
  Optimizer   &    Adam \\
  Learning rate  &  $8$e$-5$ fixed\\
  Data normalized  &    No for noise free case; Yes for noisy case \\
  \hline
{\bf Testing} &  \\
\hline
  Testing data size $M_{\text{test}}$  &  $300$\\
  \# of inference samples $M_{\text{infer}}$   &  $1$ for noise free case; $10^4$ for noisy case\\
 \hline
\end{tabular}
\end{center}
\end{table}

%
%
\section{Table of notations}
For quick reference, we summarize important notations in Table~\ref{tab:notations}.
\begin{table}[!h]
\caption{Table of notations}\label{tab:notations}
\begin{center}
\begin{tabular}{p{0.2\textwidth} p{0.8\textwidth}} 
 \hline
{\bf Notation} & {\bf Definition} \\
 \hline
$N$  & Number of steps for the discrete time Markov chains\\
$T$  & Terminal time for the continuous time Markov processes\\
 $\mathcal{G}$   &  The mapping that defines the abstract PDE in~\eqref{eqn:random-PDE}\\
  $\SA$   &    Function spaces of input functions $a$ \\
  $\SU$   &    Function spaces of output functions $u$ \\ 
  $\mathbb{R}^{d_{\mathcal{A}}}$   &    Space for the discretization mesh for $a$ \\
  $\mathbb{R}^{d_{\mathcal{U}}}$   &    Space for the discretization mesh for $u$ \\ 
  $D$    & The physical space of the abstract PDE in~\eqref{eqn:random-PDE}\\
 $a$     & Input function in $\SA$\\
 $u$     & Output function in $\SU$\\
  $\BA$     & Finite-dimensional projection of $a$\\
 $\BU$     & Finite-dimensional projection of $u$\\
   $\eta$ & Additive noise that pollutes the output $\BU$\\ 
  $\BU^{\eta}$     & Noisy observation of $\BU$\\
  $\nu_a$  & The probability measure of the input function $a \in \mathcal{A}$\\
  $\nu_{\BA}$  & The pushforward measure of the projected input vector $\BA \in \mathbb{R}^{d_{\mathcal{A}}}$\\
  $\SOP$   & The true infinite-dimensional operator\\
  $\SOPA$  & The approximated infinite-dimensional operator \\
  $s_{\theta}$ & The finite-dimensional approximation to $\SOP$ \\
  $q_{\theta}(\cdot| \BA)$   &  The parametric conditional model for operator learning\\
  $p(\cdot | \BA)$            & The true conditional model for operator learning\\  
  $U(t; a)$    & The continuous time forward time Markov process\\
  $V(t; a)$    & The parameterized continuous time reverse time Markov process\\
  $U_n(\BA)$    & The discrete time forward time Markov chain\\
  $V_n(\BA)$    & The parameterized discrete time reverse time Markov chain\\
  $\rho_t(\cdot | \BA)$     & The density of $U(t; \BA)$\\
  $\bar{\rho}(\cdot| \BA)$   &  The path-space joint density of the Markov chain $\PSU_{0:N}(\BA)$ \\
  $\bar{\rho}_{\theta}(\cdot | \BA)$   &  The path-space joint density of the parameterized Markov chain $\PSV_{0:N}(\BA)$ \\
  $\delta_{\BU}(\cdot|\BA)$   & The conditional Dirac measure centered at $\BU$\\
  $\epsilon_n$     & Standard Gaussian noise\\
  $\epsilon_{\theta}$     & Noise approximate\\
  $\mathcal{D}$   &  Noise free training data set of input-output pairs\\
  $\mathcal{D}_{\eta}$   & Noisy training data set whose outputs are corrupted by additive noise\\
  $\widetilde{\mathcal{D}}$   & Noise free testing data set\\
  $M_{\text{train}}$   &   Size of training data, i.e., $|\mathcal{D}|$ or $|\mathcal{D}_{\eta}$|\\
  $M_{\text{test}}$   &   Size of testing data, i.e., $|\widetilde{\mathcal{D}}|$\\
  $M_{\text{infer}}$   &   Number of samples from inference\\
  \hline
\end{tabular}
\end{center}
\end{table}

\end{document}